\title{Disjoint direct product decomposition of permutation groups}
\author{Mun See Chang \and Christopher Jefferson}
\begin{document}

\maketitle

\begin{abstract}
Let $H \leq S_n$ be an intransitive group with orbits $\Omega_1, \Omega_2, \ldots ,\Omega_k$. 
Then certainly $H$ is a subdirect product of the direct product of its projections on each orbit, $H|_{\Omega_1} \times H|_{\Omega_2} \times \ldots \times H|_{\Omega_k}$. Here we provide a polynomial time algorithm for computing the finest partition $P$ of the $H$-orbits such that $H = \prod_{c \in P} H|_c$ and we demonstrate its usefulness in some applications.
\end{abstract}


\section{Introduction}
A \emph{direct product decomposition} of a given group $H$ is an expression of $H$ as a direct product of groups. 
The direct product decomposition is useful for understanding the structure of the group, and to solve problems more efficiently. 
Hence, it is important to find an efficient algorithm for computing such decompositions. 
Kayal and Nezhmetdinovm, in \cite{DPDbyMultTable}, provided a polynomial time algorithm for computing a direct product decomposition of a group $H$ given by its multiplication table, which has input size $|H|^2$. 
We consider computing direct product decompositions of permutation groups given by generating sets, which are usually much smaller than the order of the group. 
Wilson, in \cite{Wilson2010FindingDP} gives a polynomial time solution to such a problem.  
However, as far as we know, this algorithm has not yet been implemented.  

In this paper, we consider a particular type of direct product decomposition for finite permutation groups, which we call a \emph{disjoint direct product decomposition}, which are direct product decompositions of permutation groups where the factors move disjoint sets of points, and so have disjoint supports (See \Cref{defn: internal DDP}). 
We give a polynomial time algorithm for finding disjoint direct product decompositions of a permutation group given by a generating set and also demonstrate the practical efficiency of our algorithm. 
In this paper, we regard the direct product of permutation groups with disjoint supports as a subgroup of the symmetric group on the union of the supports of the direct factors.

\begin{definition}[Disjoint direct product decomposition] \label{defn: internal DDP}\label{defn: ddpd}
Let $H \leq S_n$. 
We say that $H = H_1 \times H_2 \times \ldots \times H_r$ is a \emph{disjoint direct product decomposition} of $H$ if 
it is a direct product decomposition of $H$ and the groups $H_i$ have pairwise disjoint supports. 
Each factor $H_i$ is called a \emph{disjoint direct factor} of $H$, which can be identified as a subgroup of $H$ that fixes all points outside $\Supp(H_i)$.  \\
If there exists a disjoint direct product decomposition $H = H_1 \times H_2 \times \ldots \times H_r$ of $H$ with $r>1$, then we say that $H$ is \emph{d.d.p.\ decomposable}, otherwise we say that $H$ is \emph{d.d.p.\ indecomposable}.   \\
A disjoint direct product decomposition is \emph{finest} if each factor is d.d.p.\ indecomposable.
\end{definition}

Since the disjoint direct product decomposition is more restrictive than a more general decomposition, it can be computed much faster and has many useful applications. 
As we will demonstrate in \Cref{section: experiments}, the disjoint direct product decomposition of a permutation group can be used to greatly speed up various other calculations with permutation groups, such as computing the derived subgroup, number of conjugacy classes and a composition series of a given permutation group. 
Furthermore, calculations that previously could not be completed in a reasonable time frame can be solved very quickly using the disjoint direct product decomposition to subdivide the computation into smaller pieces.

Another important application of disjoint direct product decompositions lies in other areas of computer science, where groups arise from symmetries of combinatorial objects. To reduce the computation time, groups are used to eliminate the symmetries of the objects through a process called \emph{symmetry breaking} \cite{cpbooksymchapter}.

Donaldson et al.\ \cite{donaldson_miller_2006} use the disjoint direct product decomposition to improve the performance of detecting symmetric states in model checking and Grayland et.\ al.\ \cite[Theorem 10]{Graylandpaper} uses the disjoint direct product decomposition when generating symmetry breaking constraints for symmetric problems. 
Grayland et.\ al.\ give an algorithm for symmetry breaking which uses the disjoint direct product of two symmetry groups \cite{Graylandpaper} but otherwise does not consider general direct product decompositions. 
In both of these applications, disjoint direct product decompositions lead to significant speed-ups.

For these applications, the time saved depends on the number of factors in the decomposition. 
Hence we are interested in an algorithm that always computes a \emph{finest} disjoint direct product decomposition. 
By the Krull–Schmidt theorem, any finite group has a unique finest direct product decomposition \cite[Theorem~3.8]{HungerfordAlgebra}.
In \Cref{finest DDPD is unique}, we show that the finest disjoint direct product decomposition of a given finite permutation group is unique. 

The main result of this paper is to provide an efficient algorithm to compute the finest disjoint direct product decomposition of a given permutation group, and hence to prove the following: 

\begin{theorem} \label{main theorem}
Let $H \leq S_n$ be given by a generating set $X$. Then the finest disjoint direct product decomposition of $H$ can be computed in time polynomial in $n \cdot |X|$.
\end{theorem}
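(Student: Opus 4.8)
The plan is to reduce the computation to a purely combinatorial problem on the set of $H$-orbits and then to solve that problem using only standard polynomial-time primitives for permutation groups — orbit computation, pointwise stabilisers, and membership testing via a base and strong generating set — all of which run in time polynomial in $n\cdot|X|$ and return generating sets of polynomial size. For the reduction, note that a disjoint direct factor of $H$ is a normal subgroup, and the support of a normal subgroup is a union of $H$-orbits (conjugating a witness that $p$ is moved shows every point in the orbit of $p$ is moved). Hence every disjoint direct product decomposition of $H$ arises from a partition of the orbit set $\{\Omega_1,\dots,\Omega_k\}$: for a union of orbits $S$ let $H_S\le H$ be the subgroup of $H$ supported inside $S$ (the pointwise stabiliser of the complement), and call $S$ \emph{splittable} if $H=H_S\times H_{S^c}$; then the disjoint direct product decompositions of $H$ are precisely the $\prod_{c\in P}H|_c$ where $P$ runs over partitions of the orbit set all of whose parts are splittable, and it remains to compute the finest such $P$.

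The next ingredient is a cheap test for splittability. Writing $x|_S$ for the permutation that agrees with $x$ on $S$ and fixes $\Omega\setminus S$, and using that $H_S,H_{S^c}$ have disjoint supports (so commute and meet trivially), $S$ is splittable iff $H=H_SH_{S^c}$ iff $x|_S\in H$ for every $x\in H$; and it suffices to test the generators, since if $x|_S\in H$ for all $x\in X$ then $\langle x|_S:x\in X\rangle$ is a subgroup of $H$ supported on $S$ that projects onto $H|_S$ and is faithful there, hence equals $H_S$ with $|H_S|=|H|/|H_{S^c}|$, forcing splittability. So $S$ is splittable iff $x|_S\in H$ for all $x\in X$, which costs $|X|$ membership tests. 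Moreover the splittable sets form a Boolean sublattice of the subsets of the orbit set: it is closed under complementation, and under intersection because if $S,T$ are splittable then every element of $H$ restricts into $H$ on $T$, whence $x|_{S\cap T}=(x|_S)|_T\in H$ for all $x\in X$. Its atoms $B_1,\dots,B_r$ therefore partition the orbit set; $\prod_m H|_{B_m}$ is a disjoint direct product decomposition, each $H|_{B_m}$ is d.d.p.\ indecomposable (a proper split would give a splittable set strictly between $\emptyset$ and the atom $B_m$), and this is the unique finest such decomposition, re-deriving \Cref{finest DDPD is unique}.

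It remains to compute the atom $B_i$ containing $\Omega_i$, which by the lattice structure is the smallest splittable set containing $\Omega_i$. I would compute it by the following closure: set $S:=\Omega_i$; while $S$ is not splittable, pick $x\in X$ with $x|_S\notin H$, and among the orbits moved by $x$ find a set $T$ of orbits that is \emph{minimal} with the property that $H_T$ still contains an element whose restriction to $S$ is not in $H$ — this property is decided generator-by-generator on a strong generating set of the pointwise stabiliser $H_T$ (it holds iff not every generator of $H_T$ restricts into $H$ on $S$), it is upward-closed in $T$, so a minimal such $T$ is obtained by greedily deleting one orbit at a time; then replace $S$ by $S\cup T$. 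When the loop halts, output $B_i:=S$, and finally output the partition $\{B_1,\dots,B_r\}$ and the decomposition $\prod_m H|_{B_m}$.

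The delicate point — the step I expect to be the real obstacle — is proving this closure correct, namely that it maintains the invariant $\Omega_i\subseteq S\subseteq B_i$, so that it halts at exactly $B_i$ after at most $k\le n$ enlargements; termination and $\Omega_i\subseteq S$ are immediate, and the content is that the enlargement never overshoots. For this I would use a localisation observation: if $\Omega_i\subseteq S\subseteq B_i$ then $h|_S=(h|_{B_i})|_S$ for every $h\in H$, so a failure $h|_S\notin H$ is already witnessed by $h|_{B_i}$, which lies in $H$ (as $B_i$ is splittable) and is supported in $B_i$. Hence a set $T$ minimal with the ``working'' property above must satisfy $T\subseteq B_i$: writing a witness $g\in H_T$ as the product of its restrictions to the atoms, $g|_{B_i}\in H$ is supported in $B_i\cap T$ and still witnesses the property, so minimality forces $B_i\cap T=T$; and $T\not\subseteq S$, since otherwise the witness $g$ would equal $g|_S\in H$. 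Thus $S\cup T$ properly enlarges $S$ while remaining inside $B_i$. Granting this, correctness and the polynomial time bound follow routinely, as every sub-step is one of the three permutation-group primitives and the outer loops run $O(k)$ times, giving \Cref{main theorem}.
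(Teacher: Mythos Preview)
Your proof is correct and takes a genuinely different route from the paper. The paper proceeds iteratively via Goursat's lemma: fixing an ordering of the orbits, it maintains for each $i$ the finest disjoint direct product decomposition of $\Proj_{\overline{i}}(H)$ together with an $i$-separable strong generating set, and upgrades both when the $(i{+}1)$-st orbit is adjoined by sifting each current generator through $H_{(\Delta_i)}$ and reading off from the siftee's $\Omega_{i+1}$-projection which existing cells must merge with $\{i{+}1\}$. You instead characterise the splittable orbit-unions directly as a Boolean subalgebra of the power set of the orbit set (via closure under complement and intersection), identify the finest decomposition with its atoms, and compute each atom by a closure that repeatedly enlarges $S$ by an inclusion-\emph{minimal} set $T$ for which $H_T$ still contains a witness to non-splittability; the decomposition $g=\prod_m g|_{B_m}$ then shows $B_i\cap T$ is again such a set, forcing $T\subseteq B_i$ by minimality.

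What you gain is a cleaner structural picture: the Boolean-subalgebra observation re-derives the uniqueness of the finest decomposition for free and makes the invariant $S\subseteq B_i$ transparent, and your splittability test ``$x|_S\in H$ for all $x\in X$'' is pleasingly elementary. What the paper's approach buys is implementation economy: it works with a single orbit-ordered stabiliser chain throughout and updates the generating set in place by sifting, whereas your minimisation step repeatedly computes fresh generating sets for pointwise stabilisers $H_T$ as $T$ shrinks orbit by orbit. Both are polynomial in $n\cdot|X|$, but the paper's single-pass algorithm is the one more likely to be competitive in practice.
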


Our algorithm behind \Cref{main theorem} manipulates a strong generating set and therefore is fast in practice once a base and strong generating set have been found. Finding a base and strong generating set is an initial part of most permutation group algorithms. Hence, finding a disjoint direct product decomposition will not add significantly to the runtime of these algorithms.

The structure of this paper is as follows. 
In \Cref{section: lit review}, we present some related work in the literature, and the definitions, notation and background knowledge we use later on. 
In \Cref{theory of ddfd}, we present the theoretical framework which we use for the algorithms we present in \Cref{section: algorithm}. 
Also in \Cref{section: algorithm}, we prove \Cref{main theorem}.
Lastly, in \Cref{section: experiments}, we demonstrate how the algorithm can be used to speed up computation in various permutation group theoretic functions in GAP.   

\section{Background and preliminaries}
\label{section: lit review}

If $G$ is a direct factor of $H$, then $G \trianglelefteq H$. So a naive approach to finding its disjoint direct product decomposition is to consider all normal subgroups $N$ of $H$, check if there exists $K$ such that $N \times K=H$, then recursively try to decompose $N$ and $K$. 
While it is possible to optimise this approach, it has worst-case exponential complexity, 
since it requires considering all normal subgroups of $H$, the number of which grows exponentially with $|\Supp(H)|$.

Wilson's polynomial time algorithm in \cite{Wilson2010FindingDP} computes the finest (not necessarily disjoint) direct product decomposition of a given permutation group $H$. 
As far as we are aware, the algorithm has yet to be implemented. 
We show that it is substantially easier to compute the finest disjoint direct product decomposition than the finest direct product decomposition.

Donaldson and Miller in \cite[Section~5.1]{donaldson_miller_2009} present a polynomial algorithm for computing a disjoint direct factor decomposition by considering the given generators. 
They used the observation that, if $H = \langle X \rangle$ and there exists $S \subsetneqq X$ such that 
the support of $S$ and the support of $X \backslash S$ are disjoint, then 
$H = \langle S \rangle \times \langle X \backslash S \rangle$ is a disjoint direct product decomposition.
The method by Donaldson and Miller is a subprocedure of the algorithm we present in \Cref{section: algorithm}. 
However, note that the method in \cite{donaldson_miller_2009} does not guarantee that the decomposition is the finest possible as different choices of $X$ may produce different decompositions.
Donaldson and Miller reported that, using the generators computed from the graph automorphism program they used, this method seems to almost always produce the finest decomposition. 
We hypothesise that these programs almost always produce separable strong generating sets, which we shall define in \Cref{defn: separable}.

In \cite{donaldson_miller_2009}, Donaldson and Miller also present an exponential-time algorithm to compute the finest disjoint direct product decomposition of $H$. 
The algorithm involves recursively computing disjoint direct product decompositions with two factors. 
To construct such a decomposition of $H$, they consider all partitions of the set of $H$-orbits with two cells. 
They test if each partition gives rise to a disjoint direct product decomposition by checking if
$H$ is the direct product of its restriction to the union of orbits in the first cell and its restriction to the union of orbits in the second cell. 
They also made a significant improvement to their algorithm by first considering the restrictions onto pairs of orbits and deciding if they are d.d.p.\ decomposable. 




\ifthenelse{\equal{\thesis}{1}}{}{
\subsection{Notation}}

Throughout the \ifthenelse{\equal{\thesis}{1}}{chapter}{paper} let $\Omega$ be a finite set. 
Let $\overline{i}$ denote the set $\{1,2, \ldots, i\}$. 
\ifthenelse{\equal{\thesis}{1}}{}{
Let $G \leq \Sym(\Omega)$. For a subset $\Delta \subseteq \Omega$, we denote by $G_{(\Delta)}$ the \emph{pointwise stabiliser} of $\Delta$ in $G$. We denote by $\Supp(G)$ the \emph{support} of $G$.} 

\ifthenelse{\equal{\thesis}{1}}{}{
\begin{definition}
Let $G_1, G_2, \ldots, G_k$ be groups and let $G = G_1 \times G_2 \times \ldots \times G_k$. 
A \emph{subdirect product} of $G$ is a subgroup $K \leq G$ such that each projection $\rho_i: K \rightarrow G_i$ onto the $i$-th factor is surjective. That is, $\rho_i(K) = G_i$ for all $1 \leq i \leq k$.
\end{definition}
}

We would like to draw the reader's attention to an elementary result, which we shall repeatedly use later. 

\begin{lemma} \label{DP iff stab gives full group}
Let $H$ be a subdirect product of $G_1 \times G_2$. Then $H = G_1 \times G_2$ if and only if $1 \times G_2 \leq H$. 
\end{lemma}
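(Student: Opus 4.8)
The plan is to prove both directions directly from the definition of subdirect product. The forward direction is trivial: if $H = G_1 \times G_2$, then in particular $1 \times G_2 \leq G_1 \times G_2 = H$. So the content is entirely in the reverse direction.

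For the reverse direction, suppose $H \leq G_1 \times G_2$ is a subdirect product and $1 \times G_2 \leq H$. I need to show $H = G_1 \times G_2$, i.e. that every element $(g_1, g_2) \in G_1 \times G_2$ lies in $H$. The key step is to use surjectivity of the projection $\rho_1 \colon H \to G_1$: given $(g_1, g_2)$, there exists some element $(g_1, g_2') \in H$ with the same first coordinate, since $\rho_1(H) = G_1$. Then I would write $(g_1, g_2) = (g_1, g_2') \cdot (1, (g_2')^{-1} g_2)$. The first factor lies in $H$ by choice, and the second factor lies in $1 \times G_2 \leq H$ by hypothesis; hence the product lies in $H$. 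Since $(g_1, g_2)$ was arbitrary, $H = G_1 \times G_2$.

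There is essentially no obstacle here — this is a routine coset/normal-complement argument, and the only thing to be careful about is not accidentally assuming $1 \times G_2$ is normal or that we need both $1 \times G_1$ and $1 \times G_2$ in $H$; a single side suffices precisely because the other projection is already surjective. (Implicitly one also notes $1 \times G_2 \trianglelefteq G_1 \times G_2$, which makes the factorisation above well-behaved, though for the bare set-theoretic equality $H = G_1 \times G_2$ even that is not strictly needed.) I would keep the write-up to two or three lines.
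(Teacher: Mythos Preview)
Your proof is correct and essentially the same as the paper's. The only cosmetic difference is that the paper first deduces $G_1 \times 1 \leq H$ (by taking $(g_1,g_2)\in H$ and multiplying by $(1,g_2)^{-1}\in H$) and then observes that $G_1\times 1$ and $1\times G_2$ together generate $G_1\times G_2$, whereas you go directly to an arbitrary element of $G_1\times G_2$; the underlying manipulation is identical.
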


\begin{proof}
The forward implication is clear. For the backward implication, for all $(g_1, g_2) \in H$, since $(1, g_2) \in H$, we have $(g_1, 1) \in H$. 
The projection of $H$ onto $G_1$ is the whole $G_1$, so $G_1 \times 1 \leq H$. Now since $G_1 \times 1$ and $1 \times G_2$ generate $G_1 \times G_2$ and are both contained in $H$, we have $H = G_1 \times G_2$.
\end{proof}

\ifthenelse{\equal{\thesis}{1}}{}{
\label{section: goursat's}

\Cref{ori goursat} is commonly known as Goursat's lemma. It describes the subgroups of a direct product of groups and appears in the literature in various places, including, for example, \cite{schmidt_subgpLattices, cartesian_decomp}. 
Here we are only concerned about subgroups of direct products that are also subdirect products. 
However, note that every subgroup of a direct product is a subdirect product of the direct product of the images under the projections. 

\begin{theorem}[{\cite{oriGoursat}}] \label{ori goursat}
Let $G_1, G_2$ be groups. Let $H$ be a subdirect product of $G_1 \times G_2$.
Let $\proj_1: H\rightarrow G_1$ and $\proj_2: H\rightarrow G_2$ be the projection maps of $H$ onto $G_1$ and $G_2$ respectively. 
The following hold. 
\begin{enumerate}
    \item Let $N_1 \coloneqq \proj_1( \mathrm{Ker}(\proj_2)) $ and $N_2 \coloneqq \proj_2( \mathrm{Ker}(\proj_1))$. Then $N_1 \trianglelefteq G_1$ and $N_2 \trianglelefteq G_2$. 
    \item \label{H as graph of isom} 
       The map $\theta : G_1/N_1 \rightarrow G_2/N_2$ given by $N_1h_1 \mapsto N_2h_2$ if $(h_1, h_2) \in H$ is an isomorphism. 
\end{enumerate}
\end{theorem}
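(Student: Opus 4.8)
The plan is to verify the three standard ingredients of Goursat's correspondence in turn: the normality of $N_1$ and $N_2$, the well-definedness of $\theta$, and the fact that $\theta$ is a bijective homomorphism. Throughout I will exploit that, since $H$ is a subdirect product, both $\proj_1$ and $\proj_2$ are surjective, and I will use the concrete descriptions $\mathrm{Ker}(\proj_2) = \{\, (g_1, 1) \mid (g_1,1) \in H \,\}$ and $\mathrm{Ker}(\proj_1) = \{\, (1, g_2) \mid (1,g_2) \in H \,\}$. Consequently $g_1 \in N_1$ if and only if $(g_1,1) \in H$, and $g_2 \in N_2$ if and only if $(1, g_2) \in H$; these reformulations make the later computations transparent.

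For part~(1), note that $\mathrm{Ker}(\proj_2)$ is a normal subgroup of $H$, so its image under the surjective homomorphism $\proj_1 \colon H \to G_1$ is a normal subgroup of $G_1$; that is, $N_1 \trianglelefteq G_1$. The argument that $N_2 \trianglelefteq G_2$ is entirely symmetric.

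For part~(2), the crux is well-definedness of $\theta$, and the computation is in the spirit of the backward implication of \Cref{DP iff stab gives full group}: given $(h_1, h_2), (h_1', h_2') \in H$ with $N_1 h_1 = N_1 h_1'$, I have $h_1 (h_1')^{-1} \in N_1$, hence $(h_1 (h_1')^{-1}, 1) \in H$ by the reformulation above. Since $H$ is a group, $(h_1, h_2)(h_1', h_2')^{-1} = (h_1 (h_1')^{-1}, h_2 (h_2')^{-1}) \in H$, and premultiplying by $(h_1 (h_1')^{-1}, 1)^{-1} \in H$ yields $(1, h_2 (h_2')^{-1}) \in H$, so $h_2 (h_2')^{-1} \in N_2$ and $N_2 h_2 = N_2 h_2'$. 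Surjectivity of $\proj_1$ ensures every coset of $N_1$ in $G_1$ has a representative occurring as a first coordinate of some element of $H$, so $\theta$ is defined on all of $G_1/N_1$.

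It then remains to check that $\theta$ is a homomorphism, which is immediate since $(h_1, h_2),(h_1', h_2') \in H$ implies $(h_1 h_1', h_2 h_2') \in H$; that it is surjective, which follows from surjectivity of $\proj_2$; and that it is injective, for which I argue that if $\theta(N_1 h_1) = N_2$ with $(h_1, h_2) \in H$, then $h_2 \in N_2$, so $(1, h_2) \in H$, whence $(h_1, 1) = (h_1, h_2)(1, h_2)^{-1} \in H$ and thus $h_1 \in N_1$. I expect no real obstacle here beyond careful bookkeeping: the whole proof is a matter of tracking which elements of the shape $(g_1, 1)$ or $(1, g_2)$ lie in $H$ and chaining them correctly, with subdirectness of $H$ supplying the only nontrivial input.
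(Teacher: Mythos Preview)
Your proof is correct and complete. The paper does not supply its own proof of this theorem: it is stated with a citation to \cite{oriGoursat} as a known result and then used as a black box, so there is nothing to compare against beyond noting that your argument is the standard one.
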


Let $T_1$ and $T_2$ be transversals of $N_1$ in $G_1$ and $N_2$ in $G_2$ respectively. 
Let $\hat{\theta}:T_1 \rightarrow T_2$ be a map induced by $\theta$, where 
 $\hat{\theta}(t_1)=t_2$ if $\theta(N_1t_1) = N_2t_2$.  
One can show that $\hat{\theta}$ is a well-defined, and letting ${\mathcal{G}} = \{ (t_1, \hat{\theta}(t_1)) \mid t_1 \in T_1 \}$ be the graph of $\hat{\theta}$, 
we have $H = \langle {\mathcal{G}}, N_1 \times 1, 1 \times N_2 \rangle$.


For subdirect products of the direct product of more than two groups, we use an asymmetrical version of  \Cref{ori goursat}, where $\theta: G_1 \rightarrow G_2/N_2$ defined by $h_1 \mapsto N_2h_2$ for all $(h_1, h_2) \in H$ is a surjective homomorphism. Similarly, letting $\hat{\theta}:G_1 \rightarrow T_{2}$ be defined by $\hat{\theta}(g_1)=t_2$ if $\theta(g_1) = N_2t_2$ and ${\mathcal{G}}= \{ (g_1, \hat{\theta}(g_1)) \mid g_1 \in G_1 \}$ be the graph of $\hat{\theta}$, we get $H = \langle {\mathcal{G}}, 1 \times N_2 \rangle $. For more details, see \cite[Theorem~2.3]{GeneralisedGoursat}.

We will be applying Goursat's lemma to an intransitive group $H \leq S_n$, considered as a subdirect product of the direct product of its transitive constituents. 

Let $\Delta$ be a union of (some of) the $H$-orbits.  
For $h \in H$, we denote by $h|_{\Delta}$ the \emph{restriction} of $h$ onto $\Delta$, that is, the permutation in $\Sym(\Delta)$ such that $\alpha^{(h|_{\Delta})} = \alpha^h$ for all $\alpha \in \Delta$. 
We denote by $H|_{\Delta}$ the \emph{restriction} of $H$ onto $\Delta$, so $H|_{\Delta} = \{(h|_{\Delta}) \mid h \in H\}$. 
Let $K \leq \Sym(\Delta)$ and let $\Gamma$ be a set disjoint to $\Delta$. Denote by $1_{\Gamma} \times K$ the subgroup of $\Sym(\Delta \cup \Gamma)$ with support $\Delta$ such that $(1_{\Gamma} \times K)|_{\Delta} = K$.

Recall that we regard a direct product of groups that have disjoint supports as a subgroup of the symmetric group over the disjoint union of the supports of the factors. 
For the rest of the paper, we will use the following notation: 

\begin{notation} \label{set up H}
Let $H \leq S_n$. 
Fix an ordering $\Omega_1, \Omega_2, \ldots, \Omega_k$ on the $H$-orbits. 
For $1 \leq i \leq k$, let $G_i \coloneqq H|_{\Omega_i}$. 
We consider $H$ as a subdirect product of $G\coloneqq G_1 \times G_2 \times \ldots \times G_k \leq S_n$. \\
For $1 \leq i \leq k$, let $\proj_i: G \rightarrow G_i$ be defined by $h \mapsto h|_{\Omega_i}$. For a subset $I = \{I_1, I_2, \ldots, I_r \}$ of $\{1,2, \ldots,k \}$, let $\Proj_I: G \rightarrow H|_{\cup_{i \in I} \Omega_i}$ be defined by $h \mapsto h|_{\Omega_{I_1}} h|_{\Omega_{I_2}} \ldots h|_{\Omega_{I_r}}$. \\ 
For all $1 \leq i \leq k$, let $\Delta_i \coloneqq \bigcup \limits_{j \leq i}\Omega_j$.  
\end{notation}

Note that it is important that the $H$-orbits have to be in a fixed order. 
However, the choice of how we order them is unimportant. In our experiments, we chose to order the orbits by their smallest elements. 


By iteratively considering $\Proj_{\overline{i+1}}(H)$ as a subdirect product of $\Proj_{\overline{i}}(H) \times \proj_{i+1}(H)$, we get \Cref{goursat}, which gives the structure of intransitive groups and follows from \cite[Theorem~3.2]{GeneralisedGoursat}.

\begin{theorem}[{\cite{GeneralisedGoursat}}] \label{goursat}
Recall \Cref{set up H}. Then for all $1 \leq i \leq k-1$, the following hold. 
\begin{enumerate} 
    \item \label{Ni normal}
    Let $N_{i+1} \coloneqq  \proj_{i+1}(H_{(\Delta_i)})$. 
    Then $N_{i+1} \trianglelefteq G_{i+1}$. 
    \item \label{thetai surj hom} Let $\theta_i : \Proj_{\overline{i}}(H) \rightarrow G_{i+1}/N_{i+1}$ be defined by $\Proj_{\overline{i}}(h) \mapsto N_{i+1} \proj_{i+1}(h)$.
    Then $\theta_i$ is a surjective homomorphism. 
    \item \label{N and theta give next projn} Let $T_{i+1}$ be a transversal of $N_{i+1}$ in $G_{i+1}$ and
\begin{eqnarray*}
\varphi_i : \Proj_{\overline{i}}(H) &\rightarrow& \Sym( \Delta_{i+1} ) \\
\Proj_{\overline{i}}(h) &\mapsto& \Proj_{\overline{i}}(h) t \quad \text{ if $t \in T_{i+1}$ and $\theta_i(\Proj_{\overline{i}}(h)) =N_{i+1}t$}. 
\end{eqnarray*} 
Then $\Proj_{\overline{i+1}}(H) = \langle \mathrm{Im}(\varphi_i) , 1_{\Delta_i} \times N_{i+1} \rangle $. 
\end{enumerate}
\end{theorem}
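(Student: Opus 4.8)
The plan is to obtain \Cref{goursat} as a direct consequence of the two-factor Goursat correspondence, applied to the single step from $\Delta_i$ to $\Delta_{i+1}$. Fix $1 \leq i \leq k-1$ and write $A_i \coloneqq \Proj_{\overline{i}}(H) = H|_{\Delta_i}$ and $A_{i+1} \coloneqq \Proj_{\overline{i+1}}(H) = H|_{\Delta_{i+1}}$. Since each $H$-orbit, and hence each of $\Delta_i$ and $\Omega_{i+1}$, is invariant setwise under every element of $H$, the assignment $g \mapsto (g|_{\Delta_i}, g|_{\Omega_{i+1}})$ embeds $A_{i+1}$ into $\Sym(\Delta_i) \times \Sym(\Omega_{i+1})$, with image contained in $A_i \times G_{i+1}$. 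The only part of the set-up that is not pure bookkeeping is to check that this image is a \emph{subdirect} product of $A_i \times G_{i+1}$: the first projection is onto $A_i$ by definition of $A_i$, and the second projection is $\proj_{i+1}(H) = G_{i+1}$ because $H$ is a subdirect product of $G$. This is immediate.

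Next I would apply the asymmetrical form of Goursat's lemma recorded after \Cref{ori goursat} (see also \cite[Theorem~2.3]{GeneralisedGoursat}) to $A_{i+1} \leq A_i \times G_{i+1}$, with $A_i$ in the role of $G_1$ and $G_{i+1}$ in the role of $G_2$. Under this identification the first projection is restriction to $\Delta_i$, so its kernel is $\{\, h|_{\Delta_{i+1}} : h \in H,\ h|_{\Delta_i} = 1 \,\}$, whose image under the second projection is $\proj_{i+1}(H_{(\Delta_i)}) = N_{i+1}$. Goursat's lemma then gives $N_{i+1} \trianglelefteq G_{i+1}$, which is the first claim, and supplies a surjective homomorphism $A_i \to G_{i+1}/N_{i+1}$, $h|_{\Delta_i} \mapsto N_{i+1}\proj_{i+1}(h)$; this map is exactly $\theta_i$, and its well-definedness (independence of the choice of $h$ with a given restriction to $\Delta_i$) and the homomorphism property are precisely what the lemma asserts, giving the second claim.

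For the third claim I would translate the generating-set conclusion $H = \langle \mathcal{G},\, 1 \times N_2 \rangle$ of the asymmetrical Goursat lemma, where $\mathcal{G}$ is the graph of $\hat{\theta} : G_1 \to T_2$. Fixing the transversal $T_{i+1}$ of $N_{i+1}$ in $G_{i+1}$, a pair $(h|_{\Delta_i}, t)$ with $t \in T_{i+1}$ and $\theta_i(h|_{\Delta_i}) = N_{i+1} t$ corresponds, under the embedding above, to the permutation $h|_{\Delta_i}\, t \in \Sym(\Delta_{i+1})$, a product of two permutations with the disjoint supports $\Delta_i$ and $\Omega_{i+1}$. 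Hence $\mathcal{G}$ becomes exactly $\mathrm{Im}(\varphi_i)$ — here $t$ exists and is unique because $T_{i+1}$ is a transversal, and $\varphi_i$ is well-defined on $A_i$ because $\theta_i$ is — while $1 \times N_2$ becomes $1_{\Delta_i} \times N_{i+1}$. Therefore $\Proj_{\overline{i+1}}(H) = A_{i+1} = \langle \mathrm{Im}(\varphi_i),\, 1_{\Delta_i} \times N_{i+1} \rangle$, which is the third claim.

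The argument as a whole is just unwinding the classical two-factor Goursat lemma in the notation of \Cref{set up H}, so I expect no conceptual difficulty. The one place that needs genuine care — and hence the main obstacle — is the bookkeeping that passes between the abstract external-direct-product language of \Cref{ori goursat} and the ``permutation groups with disjoint supports, viewed inside a single symmetric group'' convention used here: concretely, verifying that the embedding $\Sym(\Delta_i) \times \Sym(\Omega_{i+1}) \hookrightarrow \Sym(\Delta_{i+1})$ carries the abstract graph $\mathcal{G}$ to $\mathrm{Im}(\varphi_i)$ and the abstract normal subgroup $1 \times N_2$ to $1_{\Delta_i} \times N_{i+1}$, and that $A_{i+1}$ really is subdirect in $A_i \times G_{i+1}$ so that the lemma applies.
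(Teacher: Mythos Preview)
Your proposal is correct and follows the same route the paper indicates: the paper does not give a detailed proof of \Cref{goursat} but simply observes that it follows from \cite[Theorem~3.2]{GeneralisedGoursat} by viewing $\Proj_{\overline{i+1}}(H)$ as a subdirect product of $\Proj_{\overline{i}}(H) \times \proj_{i+1}(H)$, which is exactly the two-factor asymmetric Goursat argument you spell out. Your bookkeeping translating between the external direct product and the disjoint-support conventions is the only content beyond the citation, and it is handled correctly.
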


In this paper, we will always require that $1 \in T_{i+1}$ for all $1 \leq i \leq k-1$. 
Note also that the $\varphi_i$ are not homomorphisms in general, but this is not a problem since we do not use it in our computation.

We end the section with an example, which will be a running example throughout the paper. 

\begin{example}\label{example: goursat} 
Let $x_1\coloneqq(1,2,3)(7,9,8)(10,12,11)$,  $x_2\coloneqq(4,5,6)(7,8,9)(10,11,12)$, \linebreak $x_3\coloneqq(5,6)(8,9)(11,12)$ and $ x_4\coloneqq (7,8,9)(10,11,12)$. 
Let $H \coloneqq \langle x_1, x_2, x_3, x_4 \rangle \leq S_{12}$.\\ 
Then $\Omega_1= \{ 1,2,3\}$, $\Omega_2= \{ 4,5,6 \}$,  $\Omega_3= \{ 7,8,9 \}$ and $ \Omega_4= \{ 10,11,12 \}$ are the orbits of $H$. 
For $1 \leq i \leq 4$, let $G_i = H|_{\Omega_i}$. Then $H$ is a subdirect product of $G_1 \times G_2 \times G_3 \times G_4$, where we regard the direct product as a subgroup of $S_{12}$. \\
For all $1 \leq i \leq 4$, let $\Delta_i \coloneqq \bigcup_{j \leq i}\Omega_j$. Then $\Proj_{\overline{i}}(H)= H|_{\Delta_i}$ for all $i$.
So $H|_{\Delta_2} = \langle (1,2,3),$ $ (4,5,6), $ $(5,6) \rangle$ and $H_{(\Delta_2)} = \langle (7,8,9)(10,11,12) \rangle$.  \\
Let the $N_i$ be as in \Cref{goursat}. Then $N_3 = \langle (7,8,9) \rangle$ is normal in $G_{3}$, and $N_4=1$.  \\
Let $\theta_2 : H|_{\Delta_2} \rightarrow G_3/N_3$ be as in \Cref{goursat}. Then 
$\theta((1,2,3)) = \theta(x_1|_{\Delta_2}) = N_3 x_1|_{\Omega_3} = N_3$. 
Similarly $\theta_2((4,5,6)) =  N_3$ and $\theta_2((5,6)) = N_3(8,9)$. 
So $\theta_2$ is surjective. \\
Let $T_3\coloneqq \{ (), (8,9) \}$ be a transversal of $N_3$ in $G_3$. 
Let $\varphi_2$ be as in \Cref{goursat}. 
Then $\varphi_2(H|_{\Delta_2}) = \langle (1,2,3), (4,5,6), (5,6)(8,9) \rangle$, and one could check that indeed $\langle \varphi_2(H|_{\Delta_2}) ,1_{\Delta_2} \times N_3 \rangle  = H|_{\Delta_3}$. 
\end{example}

}

\section{Disjoint direct product decomposition}
\label{theory of ddfd}

Recall \Cref{set up H}. In this section, we will first show that $H \leq S_n$ has a unique finest disjoint direct product decomposition.
Then in \Cref{3 lemmas}, we will see how the computation of disjoint direct product decompositions can be reduced to computing the $N_{i+1}$ and the kernels of the $\theta_i$ in \Cref{goursat}. We then show that these, in turn, can be efficiently computed in \Cref{section: ssgs}.

\begin{definition}
We say that two disjoint direct product decompositions $H =  H_1\times H_2\times \ldots \times H_{r}$ and $H =  K_1 \times  K_2 \times  \ldots \times K_{s}$ are \emph{equivalent}
if the sets of sets $\{\Supp(H_i) \mid 1 \leq i \leq r \}$ and $\{\Supp(K_i) \mid 1 \leq i \leq s \}$ are equal. 
\end{definition}

\begin{proposition}\label{finest DDPD is unique}
Up to equivalence, there is a unique finest disjoint direct product decomposition of $H$.
\end{proposition}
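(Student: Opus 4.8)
The plan is to reduce the claim to a statement about partitions of the set of $H$-orbits. First I would observe that any disjoint direct factor $H_i$ of $H$ has support that is a union of $H$-orbits: since $H_i \trianglelefteq H$ and $H_i$ commutes with its complementary factor (which together with $H_i$ generates $H$), the set $\Supp(H_i)$ is $H$-invariant, hence a union of orbits. Consequently every disjoint direct product decomposition $H = H_1 \times \cdots \times H_r$ induces a partition $P = \{\Supp(H_1), \ldots, \Supp(H_r)\}$ of $\Delta \coloneqq \Supp(H)$ into unions of orbits, and moreover each $H_i$ is recovered from its support as $H_i = H|_{\Supp(H_i)}$ (because $H_i$ fixes everything outside $\Supp(H_i)$ and projects onto all of $H|_{\Supp(H_i)}$, so $H_i = 1_{\Delta \setminus \Supp(H_i)} \times H|_{\Supp(H_i)}$). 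Thus a decomposition is determined up to equivalence by the partition it induces, and the decomposition is finest precisely when no cell can be split further, i.e. when each $H|_c$ is d.d.p.\ indecomposable.

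Next I would show that the collection of "admissible" partitions — those $P$ of $\Delta$ into unions of orbits with $H = \prod_{c \in P} H|_c$ — is closed under taking common refinements. Concretely, if $P$ and $Q$ are both admissible, I claim their meet $P \wedge Q$ (the partition whose cells are the nonempty intersections $c \cap d$ for $c \in P$, $d \in Q$) is again admissible. To see this, fix a cell $e = c \cap d$ with $c \in P$, $d \in Q$; it suffices by \Cref{DP iff stab gives full group} (applied with $G_1 = H|_{\Delta \setminus e}$ and $G_2 = H|_e$, noting $H$ is a subdirect product of $G_1 \times G_2$) to show $1_{\Delta\setminus e} \times H|_e \leq H$. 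Since $P$ is admissible, $1_{\Delta \setminus c} \times H|_c \leq H$, and in particular restricting the $H|_c$-factor appropriately gives $1_{\Delta \setminus c} \times H|_{c}$ contains $1_{\Delta\setminus c}\times 1_{c \setminus e} \times H|_{e}$ only if $H|_c$ itself d.d.p.-splits along $e$ — so I actually need the finer statement that $H|_c = H|_{c\cap d} \times H|_{c \setminus d}$. This follows because $Q$ admissible gives $1_{\Delta\setminus d}\times H|_d \leq H$; intersecting the two normal subgroups $1_{\Delta\setminus c}\times H|_c$ and $1_{\Delta\setminus d}\times H|_d$ of $H$ yields $1_{\Delta \setminus e}\times H|_e \leq H$, since an element supported in $c$ and also supported in $d$ is supported in $e$, and conversely any element of $H$ supported in $e$ lies in both. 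That is exactly what \Cref{DP iff stab gives full group} needs.

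Finally, with closure under meets established, I would take $P^\star$ to be the meet of all admissible partitions of $\Delta$ (a finite meet, since $\Delta$ is finite); it is admissible, and it refines every admissible partition. The associated decomposition $H = \prod_{c \in P^\star} H|_c$ is therefore finest: if some $H|_c$ were d.d.p.\ decomposable, splitting that cell would produce an admissible partition strictly refining $P^\star$, contradicting minimality. Uniqueness up to equivalence is then immediate: any finest disjoint direct product decomposition induces an admissible partition all of whose cells give indecomposable restrictions; such a partition must equal $P^\star$, since it is refined by $P^\star$ yet cannot be refined further.

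The main obstacle I anticipate is the meet-closure step — specifically, pinning down precisely why the intersection of the two "coordinate" normal subgroups $1_{\Delta\setminus c}\times H|_c$ and $1_{\Delta\setminus d}\times H|_d$ equals $1_{\Delta\setminus e}\times H|_e$ as subgroups of $H$ (rather than merely being contained in it). The containment "$\supseteq$" needs that every permutation in $H|_e$, extended by the identity, actually lies in $H$ — which is where one invokes admissibility of $P$ (to get it inside $H|_c$-part of $H$) together with admissibility of $Q$; the containment "$\subseteq$" is the easy support-bookkeeping direction. Everything else is routine verification using \Cref{DP iff stab gives full group} and the observation that supports of direct factors are unions of orbits.
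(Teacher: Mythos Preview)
Your approach is correct and takes a genuinely different route from the paper. The paper argues by contradiction: given two inequivalent finest decompositions with factors $H_i$, $K_j$ whose supports $\Gamma = \Supp(H_i)$ and $\Delta = \Supp(K_j)$ overlap but differ, it shows $H_i = H_i|_{\Gamma \setminus \Delta} \times H_i|_{\Gamma \cap \Delta}$ via an explicit element-chase (take $h_i \in H_i$, extend trivially to $\hat h_i \in H$, use the $K$-decomposition to strip away the part outside $\Delta$, and check the result lands back in $H_i$ with support contained in $\Gamma \cap \Delta$). Your lattice-theoretic framing---admissible partitions are closed under meets, so there is a unique minimum---is more structural and buys a bit more: it shows the finest decomposition refines \emph{every} disjoint direct product decomposition, not merely that two finest ones agree.

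One point worth tightening: in your meet-closure step, the equality $N_c \cap N_d = \{h \in H : \Supp(h) \subseteq e\}$ is correct, but it does not by itself give $1_{\Delta\setminus e} \times H|_e \leq H$; what is still needed is that this pointwise stabiliser surjects onto $H|_e$. You correctly flag this in your obstacle paragraph, and the fix you sketch there---lift $g \in H|_e$ to some $h \in H$, take its $c$-component via $P$-admissibility, then the $d$-component of that via $Q$-admissibility, and observe the result lies in $H$, is supported in $e$, and restricts to $g$---is exactly right. That double-projection manoeuvre is precisely the algebraic core of the paper's element-chase, so the two proofs share the same kernel; yours simply organises it around a cleaner structural statement.
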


\begin{proof}
Aiming for a contradiction, let $H =  H_1\times H_2\times \ldots \times  H_{r}$ and $H =  K_1 \times  K_2 \times  \ldots \times K_{s}$ be two inequivalent finest disjoint direct product decompositions of $H$.
Since the supports of the disjoint direct factors form a partition of $\Supp(H)$, 
there exist $1 \leq i \leq r$ and $1 \leq j \leq s$ such that $\Supp(H_i) \neq \Supp(K_j)$ and $\Supp(H_i) \cap \Supp(K_j) \neq \emptyset$. 
Let $\Gamma \coloneqq \Supp(H_i)$ and $\Delta \coloneqq \Supp(K_j)$. 
We will show that 
$H_i = H_i|_{ \Gamma \backslash \Delta} \times  H_i|_{ \Gamma \cap \Delta}$ is a disjoint direct product decomposition of $H_i$, which contradicts the fact that $H =  H_1\times H_2 \times \ldots \times  H_{r}$ is a finest decomposition. 
Since $H_i$ is a subdirect product of $H_i|_{ \Gamma \backslash \Delta} \times  H_i|_{ \Gamma \cap \Delta}$, by the backward implication of  \Cref{DP iff stab gives full group} it suffices to show that  $H_i|_{\Gamma \cap \Delta} \times 1_{\Gamma \backslash \Delta} \leq H_i$.  
We do so by showing that for all $h_i \in H_i$, there exists $h_i' \in H_i$ such that  $h_i'|_{\Gamma \cap \Delta} = h_i|_{\Gamma \cap \Delta}$ and $h_i'|_{\Gamma \backslash \Delta} =1$. \\
Let $h_i \in H_i$. 
Let $\hat{h_i} \in S_n$ be such that $\hat{h_i}|_{\Gamma} = h_i$ and $\hat{h_i}|_{\overline{n} \backslash \Gamma} = 1$. 
By the forward implication of \Cref{DP iff stab gives full group}, $\hat{h_i} \in H$.
Similarly, since $K_j$ is a disjoint direct factor of $H$, 
there exists $h \in H$ such that $h|_{\overline{n} \backslash \Delta} = \hat{h_i}|_{\overline{n} \backslash \Delta}$ and $h|_{ \Delta} = 1$. 
Then $h' \coloneqq \hat{h_i} {h}^{-1}$ is an element of $H$ such that
\[
h'|_{\Gamma\cap \Delta} = (\hat{h_i}|_{\Gamma\cap \Delta}) ({h}^{-1}|_{\Gamma\cap \Delta}) =\hat{h_i}|_{\Gamma\cap \Delta} =  {h_i}|_{\Gamma\cap \Delta} \text{, } \] 
\[h'|_{\Gamma  \backslash \Delta} = (\hat{h_i}|_{\Gamma  \backslash \Delta}) ({h}^{-1}|_{\Gamma  \backslash \Delta})  = (\hat{h_i}|_{\Gamma  \backslash \Delta}) (\hat{h_i}^{-1}|_{\Gamma  \backslash \Delta}) =1, \]
\[\text{ and } h'|_{\overline{n} \backslash \Gamma} = h'|_{\Delta \backslash \Gamma} h'|_{\overline{n} \backslash (\Delta \cup \Gamma)} = (\hat{h_i}|_{\Delta \backslash \Gamma} h^{-1}|_{\Delta \backslash \Gamma}) (\hat{h_i}|_{\overline{n} \backslash (\Delta \cup \Gamma)} \hat{h_i}^{-1}|_{\overline{n} \backslash (\Delta \cup \Gamma)} ) =1 .\]
Therefore, $h_i' := h'|_{\Gamma}$ is an element of $H_i$ such that $h_i'|_{\Gamma \cap \Delta} = h_i|_{\Gamma \cap \Delta}$ and $h_i'|_{\Gamma \backslash \Delta} =1$.
\end{proof}



\subsection{Computing the disjoint direct product decomposition}
\label{3 lemmas}

Recall \Cref{set up H}. 
We will compute the finest disjoint direct product decomposition of $H$ iteratively by computing the finest disjoint direct product decomposition of $\Proj_{\overline{i}}(H)$ for $1 \leq i \leq k$. 
In this subsection, we show for $1 \leq i < k$, how we can compute the finest disjoint direct product decomposition of $\Proj_{\overline{i+1}}(H)$ using the finest disjoint direct product decomposition of $\Proj_{\overline{i}}(H)$ and the groups $N_{i+1}$ and homomorphisms $\theta_i$ from \Cref{goursat}. 

Since the support of each disjoint direct factor of a group is a union of (some of) its orbits, 
we will be computing certain partitions of $\overline{i}$ for each $1 \leq i \leq k$. We will denote an unordered partition $\mathcal{P}$ of a set $\Gamma$ by $\langle C_1 \mid C_2 \mid \ldots \mid C_r \rangle$, where the $C_i$ are sets with disjoint intersections, called the \emph{cells} of $\mathcal{P}$, and the union of the $C_i$ is $\Gamma$. 

\begin{definition} \label{defin: i partition}
For $1 \leq i \leq k$, let $\mathcal{P}_{i} = \langle C_1 \mid C_2 \mid \ldots \mid C_r \rangle $ be the (unordered) partition of $\overline{i}$ consisting of cells $C_j \subseteq \overline{i}$ for $1 \leq j \leq r$ such that $\Proj_{\overline{i}}(H) = \Proj_{C_1}(H) \times \Proj_{C_2}(H) \times \ldots \times \Proj_{C_r}(H)$ is the finest disjoint direct product decomposition of $\Proj_{\overline{i}}(H)$.
\end{definition} 



Observe that trivially, $\mathcal{P}_1 = \langle \{1 \} \rangle$.
\Cref{finest ddfd i+1} describes how we can compute $\mathcal{P}_{i+1}$ using $\mathcal{P}_{i}$ for $1 \leq i <k$. 
To simplify notation, from now on, for subsets $I,J \subseteq \overline{k}$ and for $h \in H$, we denote by $\Proj_I(h) \times 1_{J}$ the permutation $h' \in \Sym(\Supp(\Proj_{I \cup J}(H)))$ such that $\Proj_I(h') = \Proj_I(h)$ and $ \Proj_{J}(h')=1$. Similarly, for $K \subseteq H$, we denote by $\Proj_I(K) \times 1_{J}$ the set 
$\{ \Proj_I(h) \times 1_J \mid h \in K \}$. 

\begin{proposition} \label{finest ddfd i+1}\label{finest ddfd i+1 - partn ver}
Let $1 \leq i < k$. 
Let $\mathcal{P}_{i} = \langle C_1 \mid C_2 \mid \ldots \mid C_r \rangle $ be as in \Cref{defin: i partition} and 
let $\theta_i$ be as in \Cref{goursat}.
Let $S\coloneqq \{ C_j \mid  1 \leq j \leq r , \, \Proj_{C_j}(H) \times 1_{\overline{i}\backslash C_j} \not \subseteq \mathrm{Ker}(\theta_i)\}$ and 
let $C = \bigcup_{C_j \in S} C_j \cup \{i+1 \}$.~\footnote{Note that $S$ depends in $i$. Since it is clear from the context which $i$ the set $S$ refers to, we omit the reference to simplify notation. }
Then 
\begin{equation} \label{eqn: finest ddfd of i+1}
 \Proj_{\overline{i+1}}(H)=  \Proj_C(H) \times \prod_{C_j \not \in S} \Proj_{C_j}(H)
\end{equation}
is the finest disjoint direct product decomposition of $\Proj_{\overline{i+1}}(H)$. \\
Hence the partition $\mathcal{P}_{i+1}$ from \Cref{defin: i partition} is the partition of $\overline{i+1}$ consisting of cell $C$ and all other cells $C_j$ of $\mathcal{P}_i$ such that $C_j \not \in S$. 
\end{proposition}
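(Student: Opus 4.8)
The plan is to treat the two assertions separately: first that \eqref{eqn: finest ddfd of i+1} is a disjoint direct product decomposition at all, and then that it is the finest one. Write $L \coloneqq \Proj_{\overline{i}}(H)$ and $M \coloneqq \Proj_{\overline{i+1}}(H)$, and for $1 \le j \le r$ put $D_j \coloneqq \Proj_{C_j}(H) \times 1_{\overline{i}\backslash C_j}$, so that $L = D_1 \times D_2 \times \ldots \times D_r$ is the finest disjoint direct product decomposition of $L$ and $S = \{\, C_j \mid D_j \not\subseteq \mathrm{Ker}(\theta_i) \,\}$; reindex so that $S = \{C_1, \ldots, C_m\}$. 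For $j > m$ the cell $C_j$ belongs to $\mathcal{P}_i$, so $\Proj_{C_j}(H)$ is already d.d.p.\ indecomposable, and since $C_j \subseteq \overline{i}$ it is unaffected by replacing $L$ with $M$; likewise $\Proj_C(H) = \Proj_C(M)$. The supports of $\Proj_C(H)$ and of the $\Proj_{C_j}(H)$ with $j>m$ are pairwise disjoint because $C, C_{m+1}, \ldots, C_r$ partition $\overline{i+1}$. So everything reduces to showing (i) $M = \Proj_C(M) \times D_{m+1} \times \ldots \times D_r$ and (ii) $\Proj_C(M)$ is d.d.p.\ indecomposable.

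For (i), the key point I would establish is that $C_j \notin S$ forces $D_j \le M$, where $D_j$ is regarded inside $\Sym(\Delta_{i+1})$ with support contained in $\Delta_i$. Indeed, given $d \in D_j$ one picks $h \in H$ with $h|_{\Delta_i} = d$; since $D_j \subseteq \mathrm{Ker}(\theta_i)$ we get $N_{i+1}\proj_{i+1}(h) = \theta_i(d) = N_{i+1}$, so $h|_{\Omega_{i+1}} \in N_{i+1} = \proj_{i+1}(H_{(\Delta_i)})$ by \Cref{goursat}, and choosing $h_0 \in H_{(\Delta_i)}$ with $h_0|_{\Omega_{i+1}} = h|_{\Omega_{i+1}}$ gives $(hh_0^{-1})|_{\Delta_{i+1}} = d$, whence $d \in M$. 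Therefore $D_{m+1} \times \ldots \times D_r \le M$. Since $C \sqcup (C_{m+1} \cup \ldots \cup C_r) = \overline{i+1}$, the group $M$ is a subdirect product of $\Proj_C(M) \times \Proj_{C_{m+1} \cup \ldots \cup C_r}(M)$, and $\Proj_{C_{m+1} \cup \ldots \cup C_r}(M) = D_{m+1} \times \ldots \times D_r$ (these indices lie in $\overline{i}$ and $L = D_1 \times \ldots \times D_r$), so \Cref{DP iff stab gives full group} yields (i).

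For (ii), I would argue by contradiction: suppose $\Proj_C(M) = U \times V$ with $U$ and $V$ nontrivial of disjoint support. Since $\Omega_{i+1}$ is an orbit of $\Proj_C(M)$ it lies entirely in $\Supp(U)$ or in $\Supp(V)$; say $\Omega_{i+1} \subseteq \Supp(U)$, so $\Supp(V) \subseteq \bigcup_{j \le m}\Supp(D_j) \subseteq \Delta_i$. Being a disjoint direct factor of a disjoint direct factor of $M$, $V$ is itself a disjoint direct factor of $M$, and hence — as its support lies in $\Delta_i$ — of $L = M|_{\Delta_i}$; writing $W \subseteq \overline{i}$ for the indices of the orbits in $\Supp(V)$, we have $V = \Proj_W(H)$. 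The argument then produces two facts about $V$ that cannot both hold. First, every $v \in V \le M$ fixes $\Omega_{i+1}$ pointwise, so writing $v = h|_{\Delta_{i+1}}$ with $h \in H$ gives $\proj_{i+1}(h) = 1$ and $\theta_i(v) = N_{i+1}$, i.e.\ $V \subseteq \mathrm{Ker}(\theta_i)$. Second, $\mathcal{P}_i$ is the unique finest disjoint direct product decomposition of $L$ by \Cref{finest DDPD is unique}, so it refines the decomposition $L = V \times V'$ arising from $V$ and its complementary factor; hence $W$ is a union of cells of $\mathcal{P}_i$, and since $\emptyset \ne W \subseteq C_1 \cup \ldots \cup C_m$ some cell $C_{j^*}$ with $j^* \le m$ satisfies $C_{j^*} \subseteq W$. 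Then $D_{j^*} \le \Proj_W(H) = V \subseteq \mathrm{Ker}(\theta_i)$, contradicting $C_{j^*} \in S$. This proves (ii); together with (i) it gives that \eqref{eqn: finest ddfd of i+1} is the finest disjoint direct product decomposition of $\Proj_{\overline{i+1}}(H)$, and the description of $\mathcal{P}_{i+1}$ is then immediate from \Cref{defin: i partition}.

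The routine half is (i), which only uses the description of $N_{i+1}$ in \Cref{goursat} together with \Cref{DP iff stab gives full group}. The real obstacle is (ii): ruling out a further splitting of the merged cell $C$. What makes it work is the observation that any such splitting exhibits a disjoint direct factor of $M$ supported away from $\Omega_{i+1}$; every such factor must lie in $\mathrm{Ker}(\theta_i)$, and — using the uniqueness of the finest decomposition of $L$ to see that its support is a union of $\mathcal{P}_i$-cells — it must swallow one of the very cells $C_j \in S$, which were merged into $C$ precisely because $D_j \not\subseteq \mathrm{Ker}(\theta_i)$. One small supporting point to spell out along the way is that the finest decomposition, though defined only by indecomposability of its factors, refines every disjoint direct product decomposition of $L$; this follows from \Cref{finest DDPD is unique} by refining an arbitrary decomposition until all its factors are indecomposable.
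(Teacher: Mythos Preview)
Your argument is correct and follows essentially the same strategy as the paper's: first use \Cref{DP iff stab gives full group} together with the kernel condition to obtain the decomposition, then derive a contradiction from any further splitting of $\Proj_C(H)$ by showing it would force some $D_{j^*}$ with $C_{j^*}\in S$ into $\mathrm{Ker}(\theta_i)$. The only notable tactical difference is in part (ii): you appeal to \Cref{finest DDPD is unique} to conclude that the stray factor $V$ has support equal to a union of $\mathcal{P}_i$-cells, whereas the paper instead works directly with the map $\varphi_i$ and the cell of $\mathcal{P}_{i+1}$ containing a given $C_j\in S$; both routes reach the same contradiction.
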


\begin{proof}
We will first show that \Cref{eqn: finest ddfd of i+1} is a disjoint direct product decomposition of $\Proj_{\overline{i+1}}(H)$, and then we will show that it is the \emph{finest} disjoint direct product decomposition.
The statement on $\mathcal{P}_{i+1}$ will then follow from \Cref{defin: i partition}. \\
Since the factors in \Cref{eqn: finest ddfd of i+1} move disjoint sets of points, we show that \Cref{eqn: finest ddfd of i+1} is a disjoint direct product decomposition of $\Proj_{\overline{i+1}}(H)$, by showing that it gives a direct product decomposition of $\Proj_{\overline{i+1}}(H)$. 
Observe that $\Proj_{\overline{i+1}}(H)$ is a subdirect product of $\Proj_C(H) \times \prod_{C_j \not \in S} \Proj_{C_j}(H)$.
Then by the backward implication of \Cref{DP iff stab gives full group}, it suffices to show that $1_C \times  \prod_{C_j \not \in S} \Proj_{C_j}(H) \leq \Proj_{\overline{i+1}}(H)$. 
We will do so by showing that 
$\Proj_{C_j}(H) \times 1_{\overline{i+1} \backslash C_j} \leq \Proj_{\overline{i+1}}(H)$ for all cells $C_j$ of $\mathcal{P}_i$ such that $C_j \not \in S$, then from here it follows that \Cref{eqn: finest ddfd of i+1} is a disjoint direct product decomposition of $\Proj_{\overline{i+1}}(H)$.  \\
Let $C_j \not \in S$. Then $\theta_i(\Proj_{C_j}(H) \times 1_{\overline{i} \backslash C_j}) = N_{i+1}$. 
Let $\varphi_i$ be as in \Cref{goursat}. Then $\varphi_i(\Proj_{C_j}(H) \times 1_{\overline{i} \backslash C_j}) = \Proj_{C_j}(H) \times 1_{\overline{i+1} \backslash C_j}  \leq \Proj_{\overline{i+1}}(H)$. 
Hence, \Cref{eqn: finest ddfd of i+1} gives a disjoint direct product decomposition of $\Proj_{\overline{i+1}}(H)$. \\
We will now show that \Cref{eqn: finest ddfd of i+1} is the \emph{finest} disjoint direct product decomposition.
As $C_j \not \in S$ are cells of $\mathcal{P}_i$, the groups $\Proj_{C_j}(H)$ for $C_j \not \in S$ are d.d.p.\ indecomposable, so it remains to show that $\Proj_C(H)$ is d.d.p.\ indecomposable. 
Observe that for $C_j \not \in S$, since $\Proj_{C_j}(H)$ is a finest disjoint direct factor of $\Proj_{\overline{i+1}}(H)$, each $C_j \not \in S$ is a cell of $\mathcal{P}_{i+1}$.
We proceed as follows. 
We first show that $C$ is the union of a subset of
$U \coloneqq \{ C_1, C_2, \ldots, C_r, \{i+1\} \}$ and then show that $C_j \in S$ is in the same cell of $\mathcal{P}_{i+1}$ with $\{ i+1\}$, from which we deduce that $C$ is a cell of $\mathcal{P}_{i+1}$ and so $\Proj_C(H)$ is d.d.p.\ indecomposable. \\
To prove the first claim, we show that for all $u \in U$ such that $u \cap C \neq \emptyset$, we have $u \subseteq C$. This is trivially true for $u = \{i+1\}$. 
Let $C_j$ be a cell of $\mathcal{P}_{i}$ such that $C_j \cap C \neq \emptyset$. 
We have seen that \Cref{eqn: finest ddfd of i+1} is a disjoint direct product decomposition, thus the projection $\Proj_C(H)$ is a disjoint direct factor of $\Proj_{\overline{i+1}}(H)$, so $\Proj_{C}(H) \times 1_{\overline{i+1} \backslash C} \leq \Proj_{\overline{i+1}}(H)$. Applying $P_{C_j}$ on both sides yields $\Proj_{C_j \cap C}(H) \times 1_{C_j \backslash C} \leq \Proj_{C_j}(H)$. Since $\Proj_{C_j}(H)$ is d.d.p.\ indecomposable, $C_j \cap C= C_j$, so $C_j \subseteq C$ and therefore $C$ is the union of a subset of $U$.  \\
Lastly, we show that each $C_j \in S$ is in the cell of $\mathcal{P}_{i+1}$ containing $i+1$. 
Let $C_j \in S$. Aiming for a contradiction, suppose that $C_j$ and $i+1$ are in different cells of $\mathcal{P}_{i+1}$. 
Let $\varphi_i: \Proj_{\overline{i}}(H) \rightarrow \Sym(\Delta_{i+1})$ be as in \Cref{goursat} and consider $L:= \varphi_i(\Proj_{C_j}(H) \times 1_{\overline{i}\backslash C_j}) \subseteq \Proj_{\overline{i+1}}(H)$. 
Let $D$ be a cell of $\mathcal{P}_{i+1}$ containing $C_j$.
Then $\Proj_D(L) \times 1_{\overline{i+1}\backslash D} \leq \Proj_D(H) \times 1_{\overline{i+1}\backslash D} $ is contained in $ \Proj_{\overline{i+1}}(H)$. 
Since  $i+1 \not \in D$, it follows that
\[
\Proj_{\overline{i}}(L) \times 1_{i+1} 
= \Proj_D(L) \times 1_{\overline{i+1} \backslash D}  \leq \Proj_{\overline{i+1}}(H). 
\]
Now since both $L$ and $\Proj_{\overline{i}}(L) \times 1_{i+1}$ are contained in $\Proj_{\overline{i+1}}(H)$, the set $1_{\overline{i}} \times \proj_{i+1}(L)$ is also contained in $\Proj_{\overline{i+1}}(H)$.
Therefore $\proj_{{i+1}}(L) \subseteq N_{i+1}$ and hence $\theta_i(\Proj_{C_j}(H) \times 1_{\overline{i} \backslash C_j}) = N_{i+1}$, a contradiction to the fact that $C_j \in S$.
\end{proof}

\begin{example}[running example]  \label{example: main prop} 
We return to \Cref{example: goursat}. 
First observe that \linebreak $\Proj_{\overline{2}}(H) = \langle (1,2,3),(4,5,6),(5,6) \rangle=  \proj_{1}(H) \times \proj_{2}(H)$, so $\mathcal{P}_2 = \langle \{1\} \mid  \{2\} \rangle$. 
We have seen from \Cref{example: goursat} that $\theta((1,2,3)) = \theta_2((4,5,6)) = N_3$ and $\theta_2((5,6)) = N_3(8,9)$. 
So $\proj_{1}(H) \times 1_{\overline{3}\backslash \{1\} } = \langle (1,2,3) \rangle$ is contained in $\mathrm{Ker}(\theta_i)$ while $\proj_{2}(H) \times 1_{\overline{3}\backslash \{2\} } = \langle (4,5,6), (5,6) \rangle$ is not contained in $\mathrm{Ker}(\theta_i)$.
Hence $\mathcal{P}_3 = \langle \{1\} \mid \{2,3\} \rangle$ and one can check that indeed $\Proj_{\overline{3}}(H) = \proj_{1}(H) \times \Proj_{\{2,3\}}(H)$.
\end{example}

\Cref{finest ddfd i+1} will be used as the core of our algorithm for finding the finest disjoint direct factor decomposition in \Cref{section: algorithm}.

\subsection{Orbit-ordered base and separable strong generating set}
\label{section: ssgs}

In this subsection, fix $1 \leq i <k$ and let $N_{i+1}$, $\theta_i$ and $\varphi_i$ be as in \Cref{goursat}. 
We will see how we can use some fundamental data structures associated to permutation groups to compute the $N_{i+1}$ and find the cells $C_j$ of $\mathcal{P}_i$ such that $\Proj_{C_j}(H) \times 1_{\overline{i}\backslash C_j} \not \subseteq \mathrm{Ker}(\theta_i)$.

\ifthenelse{\equal{\thesis}{1}}{
Recall the definitions of base and strong generating sets from \Cref{def: base ,def: sgs}. 
In this chapter, we will write bases with square brackets to differentiate between bases and permutations. Recall the $\Delta_i$ from \Cref{set up H}.
We will be using bases that are orbit-ordered:}{
We will be using a base and strong generating set, which is used in many permutation group algorithms \cite{sims1970, sims1971a}. For more information on how to find bases and strong generating sets, see \cite{handbook, seress}.

\begin{definition} \label{defn: base}\label{defn: sgs}
A \emph{base} of $H \leq S_n$ is a sequence $B= [ \beta_1, \beta_2, \ldots, \beta_m ]$ of points in $\overline{n}$ for some $m \in \mathds{Z}^+$ such that $H_{(\beta_1, \beta_2, \ldots, \beta_m)} = 1$. \\
A base defines a subgroup chain $H^{[1]} \geq H^{[2]} \geq \ldots \geq H^{[m+1]}$ called a \emph{stabiliser chain}, where $H^{[1]} \coloneqq H$ and $H^{[i]} \coloneqq H_{(\beta_1, \beta_2, \ldots, \beta_{i-1})}$ for $2 \leq i \leq m+1$. \\
A base $B$ is \emph{non-redundant} if for all $1 \leq i \leq m$, we have 
$H^{[i+1]} \lneqq H^{[i]}$.
Otherwise, $B$ is said to be \emph{redundant}. \\
A \emph{strong generating set} $X$ of $H$ relative to $B$ is a generating set of $H$ where
$H^{[i]} = \langle x \in X \mid x \in H^{[i]} \rangle$ for all $1 \leq i \leq m+1$. 
\end{definition}
}

\ifthenelse{\equal{\thesis}{1}}{}{Recall the $\Delta_i$ from \Cref{set up H}. We will be using bases that are orbit-ordered: }

\begin{definition} \label{defn: orbit ordered base}
A base $B\coloneqq [\beta_1, \beta_2, \ldots, \beta_m]$ of $H \leq S_n$ is \textit{orbit-ordered} with respect to the ordering $\Omega_1, \Omega_2, \ldots, \Omega_k$ of the $H$-orbits if 
there exist $1 \leq j_1 \leq j_2 \leq \ldots \leq j_k \leq m$ such that 
for all $1 \leq i \leq k$, we have $H_{(\beta_1, \beta_2, \ldots, \beta_{j_i})} = H_{(\Delta_{i})}$.
\end{definition}

\begin{remark} \label{concat orbs gives orbit ordered base}
Recall from \Cref{set up H} that we fixed an ordering $\Omega_1,$ $\Omega_2,$ $\ldots,$ $\Omega_k$ of the $H$-orbits.
Then the concatenation of $\Omega_1,\Omega_2, \ldots,\Omega_k$ is a (redundant) orbit-ordered base of $H$.
\end{remark}

We can compute $N_{i+1}$ from a strong generating set of $H$ relative to an orbit-ordered base. 


\begin{lemma} \label{can get N from sgs}
Let $B\coloneqq [\beta_1, \beta_2, \ldots, \beta_m]$ be an orbit-ordered base of $H \leq S_n$ with respect to the ordering $\Omega_1, \Omega_2, \ldots, \Omega_k$.
Let $X$ be a strong generating set of $H$ with respect to $B$.
Then $N_{i+1} = \langle \proj_{i+1}(x)  \mid x \in X \cap H_{(\Delta_{i})} \rangle$ for all $1 \leq i <k$. 
\end{lemma}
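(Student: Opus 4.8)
The plan is to deduce the statement from two observations: that $H_{(\Delta_i)}$ occurs as one of the point stabilisers in the chain defined by $B$, so that the strong generating set property hands us a generating set for it; and that a group homomorphism carries a generating set of a group to a generating set of its image.

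First I would unpack \Cref{defn: orbit ordered base}: since $B$ is orbit-ordered with respect to $\Omega_1, \Omega_2, \ldots, \Omega_k$, there is an index $j_i$ with $H_{(\beta_1, \beta_2, \ldots, \beta_{j_i})} = H_{(\Delta_i)}$, that is, $H_{(\Delta_i)} = H^{[j_i+1]}$ in the stabiliser-chain notation of \Cref{defn: base}. Because $X$ is a strong generating set of $H$ relative to $B$, the defining property $H^{[\ell]} = \langle x \in X \mid x \in H^{[\ell]}\rangle$, applied with $\ell = j_i+1$, gives $H_{(\Delta_i)} = \langle X \cap H_{(\Delta_i)}\rangle$.

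Next I would use that $\proj_{i+1}$ from \Cref{set up H} is a group homomorphism (it is the restriction of the action of $H$ to the $H$-invariant set $\Omega_{i+1}$), hence so is its restriction to $H_{(\Delta_i)}$. For any homomorphism $f$ and any generating set $Y$ of a group $K$ one has $f(K) = \langle f(Y)\rangle$; applying this with $K = H_{(\Delta_i)}$ and $Y = X \cap H_{(\Delta_i)}$ yields
\[
\proj_{i+1}\bigl(H_{(\Delta_i)}\bigr) = \langle \proj_{i+1}(x) \mid x \in X \cap H_{(\Delta_i)}\rangle.
\]
Since $N_{i+1} = \proj_{i+1}(H_{(\Delta_i)})$ by \Cref{goursat}, this is exactly the claimed identity.

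The argument is short and essentially routine; the one place to be careful is lining up the two parametrisations of the point stabilisers — the prefixes $\beta_1, \beta_2, \ldots, \beta_{j_i}$ of the base used in \Cref{defn: orbit ordered base} versus the chain $H^{[1]} \geq H^{[2]} \geq \cdots$ used to define a strong generating set — so that $H_{(\Delta_i)}$ is correctly identified with the term $H^{[j_i+1]}$ and $X \cap H_{(\Delta_i)}$ is seen to generate it.
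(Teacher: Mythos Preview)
Your proof is correct and follows essentially the same approach as the paper: identify $H_{(\Delta_i)}$ with a term of the stabiliser chain via the orbit-ordered property, invoke the strong generating set condition to obtain $H_{(\Delta_i)} = \langle X \cap H_{(\Delta_i)}\rangle$, and then apply $\proj_{i+1}$ using $N_{i+1} = \proj_{i+1}(H_{(\Delta_i)})$. The paper's version is terser (it leaves the ``homomorphisms carry generating sets to generating sets'' step implicit), but the argument is the same.
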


\begin{proof}
Since $B$ is orbit-ordered, there exists $1 \leq j \leq m$ such that $H_{(\beta_1, \beta_2, \ldots, \beta_j)} = H_{(\Delta_{i})}$.
As $X$ is a strong generating set relative to $B$, we have $H_{(\Delta_{i})} = \langle X \cap H_{(\Delta_{i})} \rangle$.
The result follows since $N_{i+1} = \proj_{i+1}(H_{(\Delta_{i})})$.
\end{proof}

Recall that we compute the $\mathcal{P}_{i}$ iteratively. At the $i$-th iterative step, we will produce a strong generating set for $H$ that is $i$-separable:  

\begin{definition} \label{defn: separable}
Let $\mathcal{P}_{i}$ be as in \Cref{defin: i partition}. 
A strong generating set $X$ of $H$ with respect to an orbit-ordered base $B$ is \emph{${i}$-separable} if 
for all $x \in X$ with $\Proj_{\overline{i}}(x) \neq 1$, there exists a unique cell $C_j$ of $\mathcal{P}_i$ such that
$\Proj_{\overline{i}}(x) \in \Proj_{C_j}(H) \times 1_{\overline{i} \backslash C_j}$.\\
We say that $X$ is \emph{separable} if it is $k$-separable, where $k$ is the number of $H$-orbits.
\end{definition}

Note that since we have fixed an ordering of the $H$-orbits, by \Cref{finest DDPD is unique}, the $\mathcal{P}_i$ are unique, so ${i}$-separability depends only on $i$. 

\begin{example}[running example] \label{example: i separable} 
\ifthenelse{\equal{\thesis}{1}}{Let $x_1, x_2, x_3, x_4$ and $H$ be as in \Cref{example: goursat}.}{Recall \Cref{example: goursat}, where $x_1\coloneqq(1,2,3)(7,9,8)(10,12,11)$,  $x_2\coloneqq(4,5,6)(7,8,9)(10,11,12)$,  $x_3\coloneqq(5,6)(8,9)(11,12)$ and $ x_4\coloneqq (7,8,9)(10,11,12)$.} 
We order the $H$-orbits by their smallest elements, so we have an ordering $\Omega_1, \Omega_2, \Omega_3, \Omega_4$. 
Then $B \coloneqq [1,4,5,7]$ is an orbit-ordered base since 
$H_{(\Delta_1)} = H_{(1)}$,  $H_{(\Delta_2)} = H_{(1,4,5)}$,  $H_{(\Delta_3)} = H_{(\Delta_4)} = H_{(1,4,5,7)}$.
The generating set $X\coloneqq\{x_1, x_2, x_3, x_4 \}$ is a strong generating set of $H$ with respect to the base $B$. 
So $\Proj_{\overline{2}}(H) = \langle (1,2,3), (4,5,6), (5,6) \rangle$ and hence 
$\mathcal{P}_2 = \langle \{1\} \mid \{2\} \rangle$. 
Then $X$ is $2$-separable since  
\[ 
\Proj_{\overline{2}}(x_1) \in \proj_1(H) \times 1_2 \quad \text{and} \quad 
\Proj_{\overline{2}}(x_2), \Proj_{\overline{2}}(x_3) \in 1_1 \times \proj_2(H) \quad \text{and} \quad \Proj_{\overline{2}}(x_4)=1.
\]
Similarly, we can show that $\mathcal{P}_3 = \langle \{1\} \mid \{ 2, 3\} \rangle$.
The set $X$ is not $3$-separable since
$\proj_1(x_1)$ and $\Proj_{\{ 2,3 \}}(x_1)$ are both non-trivial.
\end{example}

For each cell $C_j$ of $\mathcal{P}_i$, 
we may compute $\theta_i( \Proj_{C_j}(H) \times 1_{\overline{i} \backslash C_j})$ using an ${i}$-separable strong generating set of $H$:

\begin{lemma} \label{get theta from sgs}
Let $X$ be an ${i}$-separable strong generating set of $H$. Then for each cell $C_j$ of $\mathcal{P}_i$, we have
\[\theta_i( \Proj_{C_j}(H) \times 1_{\overline{i} \backslash C_j}) = \langle N_{i+1}\proj_{i+1}(x) \mid x \in X \text{ and } \Proj_{C_j}(x) \neq 1 \rangle.\]
\end{lemma}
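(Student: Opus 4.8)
The plan is to reduce the claim to exhibiting an explicit generating set of the subgroup $\Proj_{C_j}(H) \times 1_{\overline{i}\backslash C_j}$ of $\Proj_{\overline{i}}(H)$ and then pushing that generating set through the homomorphism $\theta_i$ from \Cref{goursat}, which by definition satisfies $\theta_i(\Proj_{\overline{i}}(h)) = N_{i+1}\proj_{i+1}(h)$ for all $h \in H$. Since a homomorphism carries a generating set of a subgroup to a generating set of its image, this immediately yields the displayed formula once the generating set is in hand.

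First I would extract from $i$-separability the identity that does all the work. Fix a cell $C_j$ of $\mathcal{P}_i$ and let $x \in X$ with $\Proj_{C_j}(x) \neq 1$; then $\Proj_{\overline{i}}(x) \neq 1$, so by \Cref{defn: separable} there is a unique cell $C_{j'}$ of $\mathcal{P}_i$ with $\Proj_{\overline{i}}(x) \in \Proj_{C_{j'}}(H) \times 1_{\overline{i}\backslash C_{j'}}$. As the cells of $\mathcal{P}_i$ are pairwise disjoint, $C_j \neq C_{j'}$ would give $C_j \subseteq \overline{i}\backslash C_{j'}$ and hence $\Proj_{C_j}(x) = 1$, a contradiction; therefore $C_{j'} = C_j$ and $\Proj_{\overline{i}}(x) = \Proj_{C_j}(x) \times 1_{\overline{i}\backslash C_j}$. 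Consequently $\{\Proj_{\overline{i}}(x) \mid x \in X,\ \Proj_{C_j}(x) \neq 1\}$ coincides with $\{\Proj_{C_j}(x) \times 1_{\overline{i}\backslash C_j} \mid x \in X,\ \Proj_{C_j}(x) \neq 1\}$.

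Next I would show that this set generates $\Proj_{C_j}(H) \times 1_{\overline{i}\backslash C_j}$. Since $X$ generates $H$ and the restriction of $\Proj_{C_j}$ to $H$ is a homomorphism with image $\Proj_{C_j}(H)$ (see \Cref{set up H}), the set $\{\Proj_{C_j}(x) \mid x \in X\}$ generates $\Proj_{C_j}(H)$, and deleting the trivial elements does not change the generated subgroup; transporting along the (injective homomorphic) map $g \mapsto g \times 1_{\overline{i}\backslash C_j}$ then produces a generating set of $\Proj_{C_j}(H) \times 1_{\overline{i}\backslash C_j}$. Applying $\theta_i$ and using the previous paragraph gives
\[
\theta_i\bigl(\Proj_{C_j}(H) \times 1_{\overline{i}\backslash C_j}\bigr) = \langle\, \theta_i(\Proj_{\overline{i}}(x)) \mid x \in X,\ \Proj_{C_j}(x) \neq 1 \,\rangle = \langle\, N_{i+1}\proj_{i+1}(x) \mid x \in X,\ \Proj_{C_j}(x) \neq 1 \,\rangle,
\]
as required.

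The only point needing care — and it is hardly an obstacle — is the argument that $\Proj_{C_j}(x) \neq 1$ forces the unique separating cell of $x$ to be exactly $C_j$ (rather than merely a cell meeting $C_j$); this is the single place where $i$-separability is invoked, and everything else is the routine fact that homomorphisms and isomorphisms send generating sets to generating sets.
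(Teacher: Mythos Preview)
Your proof is correct and follows the same approach as the paper: show that $\{\Proj_{\overline{i}}(x) \mid x \in X,\ \Proj_{C_j}(x)\neq 1\}$ generates $\Proj_{C_j}(H)\times 1_{\overline{i}\backslash C_j}$, then push through the homomorphism $\theta_i$. The paper compresses the first step into a single sentence invoking $i$-separability, whereas you carefully unpack why the unique separating cell of such an $x$ must be $C_j$ and why the resulting set generates; this is exactly the content the paper leaves implicit.
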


\begin{proof}
Since $X$ is ${i}$-separable, $\Proj_{C_j}(H) \times 1_{\overline{i} \backslash C_j}$ is generated by the $\Proj_{\overline{i}}(x)$ where $x \in X$ such that $\Proj_{C_j}(x) \neq 1$. 
So $\theta_i( \Proj_{C_j}(H) \times 1_{\overline{i} \backslash C_j}) = \langle N_{i+1}\proj_{i+1}(x) \mid x \in X \text{ and } \Proj_{C_j}(x) \neq 1 \rangle$. 
\end{proof}

\ifthenelse{\equal{\thesis}{1}}{
Recall the sifting procedure and the definition of siftees from \Cref{sifting procedure}. Note that $g_s$ is a product of $g$ and an element of $H$: 
}{
In our algorithm, we will be using the \emph{sifting procedure} to compute an $(i+1)$-separable strong generating set from an $i$-separable strong generating set.
The sifting of $g \in S_n$ by $H$ attempts to write $g$ as an element of $H$, and can be used for membership testing. For more details on sifting, please refer to \cite[Chapter~4]{seress}. 

\begin{definition}\label{sifting procedure}
Let $B = [\beta_1, \beta_2, \ldots, \beta_m]$ be a base of $H$, and let $H^{[i]}$ for $1 \leq i \leq m+1$ be the stabilisers in the stabiliser chain defined by $B$, as in \Cref{defn: base}.
For each $1 \leq i \leq  m$, let $R_i$ be a transversal of $H^{[i+1]}$ in $H^{[i]}$. 
Let $g \in S_n$ be given. 
The so-called \emph{sifting of $g$ by $H$} works as follows. We
initialise $g_1 \coloneqq g$. 
For $1 \leq i \leq m$, we recursively find $r_i \in R_i$ such that $\beta_i^{r_i} = \beta_i^{g_i}$, and set $g_{i+1} \coloneqq g_{i} r_{i}^{-1}$. 
The procedure terminates when either
\begin{enumerate}
    \item $2 \leq s \leq m$ and there are no $r_s \in R_s$ such that $\beta_s^{r_s} = \beta_s^{g_s}$, or
    \item $s=m+1$ and we have computed $g_{s}. $ 
\end{enumerate}
In both cases, $g_{s}$ is a \emph{siftee} of $g$ by $H$. 
\end{definition}

We may conclude that $g \in H$ if $g_{m+1}= g r_1^{-1} r_2^{-1} \ldots r_m^{-1} =1$. 
Observe that for each $1 \leq i \leq  s-1$, the permutation $g_{i+1}$ fixes $\beta_i$. 
Then as $\beta_i$ is fixed by all $R_j$ for $j \geq i$, the siftee $g_s$ also fixes $\beta_i$. 
Note also that $g_s$ is a product of $g$ and an element of $H$: }

\begin{lemma}\label{props of siftee by stab}
Let $g \in S_n$ and let $g'$ be a \emph{siftee} of $g$ by $H$. Then there exists $h \in H$ such that $g = g'h$.
\end{lemma}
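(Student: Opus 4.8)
The plan is to simply unwind the recursion in the definition of the sifting procedure (\Cref{sifting procedure}). Recall that we set $g_1 \coloneqq g$ and, for $1 \leq i \leq s-1$, we defined $g_{i+1} \coloneqq g_i r_i^{-1}$ with $r_i \in R_i$, where $R_i$ is a transversal of $H^{[i+1]}$ in $H^{[i]}$ and the siftee is $g' = g_s$. So the first step is to expand this telescoping product to obtain
\[
g_s = g_{s-1} r_{s-1}^{-1} = g_{s-2} r_{s-2}^{-1} r_{s-1}^{-1} = \cdots = g_1 r_1^{-1} r_2^{-1} \cdots r_{s-1}^{-1} = g\, r_1^{-1} r_2^{-1} \cdots r_{s-1}^{-1}.
\]

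Rearranging gives $g = g_s\, (r_1^{-1} r_2^{-1} \cdots r_{s-1}^{-1})^{-1} = g_s\, r_{s-1} r_{s-2} \cdots r_1$. The second step is to observe that each $r_i$ lies in $H$: it belongs to the transversal $R_i \subseteq H^{[i]} \leq H$, and since $H$ is a group it is closed under products, so $h \coloneqq r_{s-1} r_{s-2} \cdots r_1 \in H$. Setting $g' = g_s$, we conclude $g = g' h$ with $h \in H$, as required.

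There is essentially no obstacle here; the statement is a direct bookkeeping consequence of the recursive definition, and the only point that needs a word of care is that transversal elements lie in $H$ (so that their product does too). One could note additionally that this works uniformly whether the sifting terminates in case (1) or case (2) of \Cref{sifting procedure}, since in both cases the siftee has the form $g_s$ for the appropriate terminal index $s$.
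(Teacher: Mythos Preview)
Your proof is correct and follows essentially the same approach as the paper: both unwind the sifting recursion to write $g' = g\, r_1^{-1} r_2^{-1} \cdots r_{s-1}^{-1}$ and then use $R_i \subseteq H$ to conclude. Your version simply spells out the telescoping and the rearrangement in a bit more detail.
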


\begin{proof}
Let $B$ and the $R_i$ be as in \Cref{sifting procedure}. Then there exist $1 \leq s \leq m+1$ and $r_i \in R_i$ for all $1 \leq i \leq s-1$ such that $g' = g r_1^{-1} r_2^{-1} \ldots r_{s-1}^{-1}$. 
Since the $R_i \subseteq H$, the result follows. 
\end{proof}

\begin{example} [sifting]
Let $X = \{ (1,2,3,4,5), (2,5)(3,4) \}$ and $H \coloneqq \langle X \rangle$. 
Then $X$ is a strong generating set of $H$ with respect to base $B\coloneqq [1,2]$.  
Let $R_1 \coloneqq \{ 1, (1,5,4,3,2),$ $ (1,4,2,5,3),$ $(1,2)(3,5),$ $(1,3,5,2,4) \}$ and $R_2 \coloneqq \{1,  (2,5)(3,4)\}$ be the transversals for $H_{(1)}$ in $H$ and for $H_{(1,2)}$ in $H_{(1)}$ respectively. 
Consider sifting $g = (1,2,4,5)$ by $H$.  
Initialise $g_1 \coloneqq g$. Then $r_1 \coloneqq (1,2)(3,5)$ is an element of $R_1$ mapping $1$ to $1^{g_1} = 2$. So $g_2 = g_1 r_1^{-1} =(2,4,3,5)$. 
Now there is no $r_2 \in R_2$ mapping $2$ to $2^{g_2} = 4$. Therefore we get a siftee $(2,4,3,5)$. \\
If we have $(1,2,3,4,5)$ in $R_1$ instead of $(1,2)(3,5)$, then we get siftee $g (1,2,3,4,5)^{-1} = (2,3)$. 
\end{example}

The siftee of $g \in S_n$ by $H$ obtained from a sifting procedure is not unique: it depends on the choice of the base $B$ and the transversals $R_i$ associated with the stabiliser chain defined by $B$.


\ifthenelse{\equal{\thesis}{1}}{

Recall that a base $B=[\beta_1, \beta_2, \ldots, \beta_m]$ of $H$ defines a subgroup chain $H^{[1]} \geq H^{[2]} \geq \ldots \geq H^{[m+1]}$ called a \emph{stabiliser chain}, where $H^{[1]} \coloneqq H$ and $H^{[i]} \coloneqq H_{(\beta_1, \beta_2, \ldots, \beta_{i-1})}$ for $2 \leq i \leq m+1$.

\begin{remark} \label{sift by reusing stab chain}
In our algorithm, we will be sifting elements of $H$ by stabilisers $H_{(\Delta_{i})}$. 
Let $B\coloneqq[\beta_1, \beta_2, \ldots, \beta_m]$ be an orbit-ordered base of $H$ with respect to the ordering $\Omega_1, \Omega_2, \ldots, \Omega_k$. Then there exists $1 \leq j \leq m$ such that $[\beta_{j+1}, \beta_{j+2}, \ldots, \beta_m]$ is a base of $H_{(\Delta_{i})}$, with associated stabiliser chain $H^{[j+1]} \geq H^{[j+2]} \geq \ldots \geq H^{[m+1]}$, which is a subchain of the stabiliser chain of $H$ defined by $B$.  
So whenever we sift an element $h \in H$ by a stabiliser $H_{(\Delta_{i})}$, we may sift using the stabiliser chain defined by an orbit-ordered base $B$ of $H$.
\end{remark}

}{
In our algorithm, we will be sifting elements of $H$ by stabilisers $H_{(\Delta_{i})}$. Let $B\coloneqq(\beta_1,  \ldots, \beta_m)$ be an orbit-ordered base of $H$ with respect to the ordering $\Omega_1, \Omega_2, \ldots, \Omega_k$. 
Then there exists $1 \leq j \leq m$ such that $(\beta_{j+1}, \beta_{j+2}, \ldots, \beta_m)$ is a base of $H_{(\Delta_{i})}$ (where $j$ here is the $j_i$ from \Cref{defn: orbit ordered base}), with associated stabiliser chain $H^{[j+1]} \geq H^{[j+2]} \geq \ldots \geq H^{[m+1]}$, which is a subchain of the stabiliser chain of $H$ defined by $B$.  
Whenever we sift an element $h \in H$ by a stabiliser $H_{(\Delta_{i})}$, we sift using the partial stabiliser chain defined by an orbit-ordered base $B$ of $H$. 

\begin{definition} \label{sift by reusing stab chain}
Let $B\coloneqq(\beta_1, \beta_2, \ldots, \beta_m)$ be an orbit-ordered base of $H$ and let $1 \leq j \leq m$ such that $(\beta_{j+1}, \beta_{j+2}, \ldots, \beta_m)$ is a base of $H_{(\Delta_{i})}$. 
For $h \in H$, a \emph{siftee of $h$ by $H_{(\Delta_{i})}$}
is a siftee obtained by sifting $h$ using the stabiliser chain defined by the base $(\beta_{j+1}, \beta_{j+2}, \ldots, \beta_m)$. 
\end{definition}
}

For $x \in X$, we will be deciding whether $\proj_{i+1}(x) \in N_{i+1}$ by considering a siftee of $x$ by $H_{(\Delta_i)}$. 

\begin{lemma}\label{image of theta by siftee}
Let $B$ be an orbit-ordered base of $H \leq S_n$ with respect to the $H$-orbits ordering $\Omega_1, \Omega_2, \ldots, \Omega_k$. 
Let $x \in H$ and $x'$ be a siftee of $x$ by $H_{(\Delta_i)}$, where we sift $x$ as in \Cref{sift by reusing stab chain}.
Then $\proj_{i+1}(x) \in N_{i+1}$ if and only if $\proj_{i+1}(x') = 1$. 
\end{lemma}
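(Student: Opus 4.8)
The plan is to prove the two implications separately; the reverse implication is short, while the forward one requires following the sifting process of \Cref{sift by reusing stab chain} step by step.

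For the reverse implication, assume $\proj_{i+1}(x') = 1$. Since $x'$ is a siftee of $x$ by $H_{(\Delta_i)}$, applying \Cref{props of siftee by stab} with $H_{(\Delta_i)}$ in place of $H$ yields an element $h \in H_{(\Delta_i)}$ with $x = x'h$. Then $\proj_{i+1}(x) = \proj_{i+1}(x')\proj_{i+1}(h) = \proj_{i+1}(h)$, which lies in $\proj_{i+1}(H_{(\Delta_i)}) = N_{i+1}$.

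For the forward implication, assume $\proj_{i+1}(x) \in N_{i+1}$ and write $K \coloneqq H_{(\Delta_i)}$. Recall that $x'$ is produced by sifting $x$ through the base $[\beta_{j+1}, \ldots, \beta_m]$ of $K$, where $j = j_i$; let $x = x_{j+1}, x_{j+2}, \ldots$ be the successive partial siftees and $R_{j+1}, \ldots, R_m$ the transversals used, so that $H^{[t]}$ for $t \geq j+1$ is the associated stabiliser chain. The structural input, and where the orbit-ordered hypothesis is used, is that the base points of $K$ lying in $\Omega_{i+1}$ form an initial segment $\beta_{j+1}, \ldots, \beta_p$ with $K_{(\beta_{j+1}, \ldots, \beta_p)} = H_{(\Delta_{i+1})} = \mathrm{Ker}(\proj_{i+1}|_K)$; hence $[\beta_{j+1}, \ldots, \beta_p]$ is a base of $N_{i+1} = K|_{\Omega_{i+1}}$, and for each such $t$ one has $H^{[t]}|_{\Omega_{i+1}} = (N_{i+1})_{(\beta_{j+1}, \ldots, \beta_{t-1})}$. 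Every transversal element used lies in $K$, so acts on $\Omega_{i+1}$ as an element of $N_{i+1}$; an induction on $t$ then gives $x_t|_{\Omega_{i+1}} \in N_{i+1}$, while $x_t$, being a partial siftee, fixes $\beta_{j+1}, \ldots, \beta_{t-1}$. For $j+1 \leq t \leq p$ these two facts force $x_t|_{\Omega_{i+1}} \in (N_{i+1})_{(\beta_{j+1}, \ldots, \beta_{t-1})} = H^{[t]}|_{\Omega_{i+1}}$, so $\beta_t^{x_t}$ lies in the $H^{[t]}$-orbit of $\beta_t$ and the sift does not terminate early at step $t$. Thus the sift reaches $x_{p+1}$, which fixes $\beta_{j+1}, \ldots, \beta_p$ and restricts to an element of $N_{i+1}$, hence restricts to $1$ since $[\beta_{j+1}, \ldots, \beta_p]$ is a base of $N_{i+1}$; that is, $\proj_{i+1}(x_{p+1}) = 1$. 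Finally, all later transversal elements lie in $H^{[p+1]} = K_{(\beta_{j+1}, \ldots, \beta_p)} = H_{(\Delta_{i+1})}$, which fixes $\Omega_{i+1}$ pointwise, so the final siftee $x'$ satisfies $\proj_{i+1}(x') = \proj_{i+1}(x_{p+1}) = 1$.

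The main obstacle is the bookkeeping in the forward direction: one must verify that sifting $x$ by $K$, once restricted to $\Omega_{i+1}$, mirrors sifting $\proj_{i+1}(x)$ by $N_{i+1}$ — concretely, that $H^{[t]}|_{\Omega_{i+1}} = (N_{i+1})_{(\beta_{j+1}, \ldots, \beta_{t-1})}$ and that $\beta_t^{H^{[t]}}$ is computed inside $\Omega_{i+1}$ — and this relies on the base points responsible for $\Omega_{i+1}$ appearing first in the base of $K$ and lying in $\Omega_{i+1}$, which is what the orbit-ordered hypothesis provides.
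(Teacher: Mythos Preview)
Your proof is correct and takes essentially the same approach as the paper: the easy direction via \Cref{props of siftee by stab}, and the other by using that the initial block of base points of $H_{(\Delta_i)}$ serves as a base of $N_{i+1}$, so the sift runs through that block and leaves a siftee trivial on $\Omega_{i+1}$. You are more explicit than the paper in justifying why the sift cannot terminate early, tracking that each partial siftee restricts on $\Omega_{i+1}$ to an element of the appropriate point stabiliser inside $N_{i+1}$; the paper states this in one line.
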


\begin{proof}
If $\proj_{i+1}(x) \in N_{i+1}$, by \Cref{props of siftee by stab}, there exists $h \in H_{(\Delta_i)}$ such that $x=x'h$. 
So $\proj_{i+1}(x) = \proj_{i+1}(x') \proj_{i+1}(h) \in N_{i+1}$.  \\
Suppose now that $\proj_{i+1}(x') = 1$. Since $B$ is orbit-ordered, there exist $1 \leq t \leq u \leq m$ such that $H_{(\beta_1, \beta_2, \ldots ,\beta_t)} = H_{(\Delta_i)}$ and $H_{(\beta_1, \beta_2, \ldots, \beta_u)} = H_{(\Delta_{i+1})}$.  
So $[\beta_{t+1}, \beta_{t+2}, \ldots, \beta_{u}]$ is a base of $\proj_{i+1}(H_{(\Delta_i)}) = N_{i+1}$. 
Since $\proj_{i+1}(x) \in N_{i+1}$, sifting $x$ does not terminate until after we have considered the image of $\beta_{u}$. 
More specifically, there exists $u \leq s \leq m+1$ and $r_i \in R_i$ for all $1 \leq i \leq s-1$ such that $x' = x r_{t+1}^{-1} r_{t+2}^{-1} \ldots r_{s-1}^{-1}$. 
Hence $x'$ fixes $[\beta_{t+1}, \beta_{t+1}, \ldots, \beta_{u}]$. 
Therefore $\rho_{i+1}(x')$ is an element of $N_{i+1}$ fixing the base of $N_{i+1}$, so $\proj_{i+1}(x')=1$. 
\end{proof}

\section{Algorithm}
\label{section: algorithm}


In this section, we present an algorithm to compute the finest disjoint direct product decomposition of a given permutation group $H = \langle X \rangle \leq S_n$ and show that it has polynomial time complexity in terms of $n \cdot |X|$.

Recall \Cref{set up H}. 
For $1 \leq i \leq k$, let $\mathcal{P}_i$ be as in \Cref{defin: i partition}. 
As the base case, we begin with $\mathcal{P}_1 \coloneqq \langle \{1\} \rangle$ and 
we will compute $\mathcal{P}_{i+1}$ iteratively using \Cref{finest ddfd i+1 - partn ver}.

Our algorithm to compute the finest disjoint direct product decomposition is presented in \Cref{algm: DDPD overall}. 
This algorithm computes a base and an initial strong generating set for \(H\) and then calls \Cref{algm: DDPD} repeatedly to calculate the \(\mathcal{P}_{i}\).

\begin{algorithm}[ht]
\caption{Finding the finest disjoint direct product decomposition of $H$} 
\label{algm: DDPD overall}
\textbf{Input:} $H \leq S_n$, given by a generating set $X$ \\
\textbf{Output:} $\mathcal{P}_k$, where $k$ is the number of $H$-orbits
\begin{algorithmic}[1]
    \State Fix an ordering $\mathcal{O}$ of the $H$-orbits $\Omega_1, \Omega_2, \ldots, \Omega_k$ 
    \State Find an orbit-ordered base $B$ of $H$ with respect to the ordering of $H$-orbits
    \State Compute a strong generating set $X_1$ of $H$ relative to $B$ 
    \State Initialise $\mathcal{P}_1 = \langle \{ 1 \} \rangle$
    \For{$i \in [1..k-1]$}
        \State $X_{i+1},\mathcal{P}_{i+1} \gets DDPD(i, B, X_i, \mathcal{P}_{i}, \mathcal{O})$
    \EndFor    
    \State \Return $\mathcal{P}_{k}$
\end{algorithmic}
\end{algorithm}

\begin{algorithm}[h!]
\caption{Finding the finest disjoint direct product decomposition of $\Proj_{\overline{i+1}}(H)$} \label{algm: DDPD}
\textbf{Input:} Integer $1 \leq i \leq k$, base $B$ of $H$, partition $\mathcal{P}_i= \langle C_1 \mid C_2 \mid \ldots \mid C_r \rangle$ of $\{1,2, \ldots , k\}$ where $k$ is the number of $H$-orbits, an $i$-separable generating set $X_i$ of $H$, and an ordering $\mathcal{O}$ of the $H$-orbits \\
\textbf{Output:} $\mathcal{P}_{i+1}$ and an $(i+1)$-separable strong generating set $X_{i+1}$ of $H$
\begin{algorithmic}[1]
    \Procedure{DDPD}{$i$, $B$, $X_i$, $\mathcal{P}_i$, $\mathcal{O}$}
        \State $S \gets \{\}$
        \State $X_{i+1} \gets \{ \}$
        \For{$x \in X_i$} 
            \If {$x \not \in H_{(\Delta_i)}$}
                \State Find cell $C_j$ of $\mathcal{P}_i$ such that $\Proj_{\overline{i}}(x) \in \Proj_{C_j}(H) \times 1_{\overline{i} \backslash C_j}$
                \State $x' \gets$ siftee of $x$ by $H_{(\Delta_i)}$ using the stabiliser chain defined by $B$  \newline \hspace*{3em}  \Comment{see \Cref{sift by reusing stab chain}}
                \State Add $x'$ to $X_{i+1}$
                \If {$\proj_{i+1}(x') \neq 1$} 
                    \State Add $C_j$ to $S$
                \EndIf
            \Else    
                \State Add $x$ to $X_{i+1}$
            \EndIf        
        \EndFor
            \State $C \gets ( \bigcup \limits_{C_j \in S} C_j) \cup \{i+1\}$
            \State $\mathcal{P}_{i+1} \gets $ the partition consisting of cell $C$ and all other cells $C_j$ of $\mathcal{P}_i$ such that $C_j \not \in S$
        \State \Return $ X_{i+1}, \mathcal{P}_{i+1}$
    \EndProcedure
\end{algorithmic}
\end{algorithm}

We give an example for \Cref{algm: DDPD} here. 

\begin{example} [running example]
Consider $H$ in \Cref{example: i separable}.
We have seen that $\mathcal{P}_2 = \langle \{1\} \mid \{2\} \rangle$ and $X$ is 2-separable. 
To compute $\mathcal{P}_3$, initialise $S \coloneqq \{ \}$.
Observe that $\Proj_{\overline{2}}(x_1) \in \Proj_1(H) \times 1_2$ and $x_1 \not \in H_{(\Delta_2)}$. Sifting $x_1$ by $H_{(\Delta_2)}$ gives us a siftee $(1,2,3)$.
Similarly, $\Proj_{\overline{2}}(x_2) \in \Proj_2(H) \times 1_1$ and $x_2 \not \in H_{(\Delta_2)}$ with a siftee $(4,5,6)$. 
Now $x_3 \in \Proj_2(H) \times 1_1$ and $x_3 \not \in H_{(\Delta_2)}$ has a siftee $(5,6)(8,9)(11,12)$, which has a non-trivial projection on $\{ 7,8,9\}$. So we add $\{2\} $ to $S$. 
Since $x_4 \in H_{(\Delta_2)}$ we add $x_4$ to $X_3$.
Therefore,  $X_3= \{ (1,2,3), (4,5,6), (5,6)(8,9)(11,12),$ $ (7,8,9)(10,11,12)\}$ and $\mathcal{P}_3 = \langle \{1\} \mid \{2,3\} \rangle$. 
\end{example}

\begin{lemma}
For $1 \leq i \leq k$, the sets $X_{i}$ computed in \Cref{algm: DDPD overall,algm: DDPD} are strong generating sets of $H$ with respect to the base $B=[\beta_1,\beta_2, \ldots,\beta_m]$ defined on line 2 of \Cref{algm: DDPD overall}.  
\end{lemma}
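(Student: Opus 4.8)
The plan is to induct on $i$. The base case $i=1$ is immediate: line~3 of \Cref{algm: DDPD overall} computes $X_1$ as a strong generating set of $H$ relative to $B$. For the inductive step, I assume $X_i$ is a strong generating set of $H$ relative to $B=[\beta_1,\beta_2,\ldots,\beta_m]$, with associated stabiliser chain $H^{[1]} \geq H^{[2]} \geq \ldots \geq H^{[m+1]}$, and show the same for the $X_{i+1}$ returned by \Cref{algm: DDPD} with input $i$. Since $B$ is orbit-ordered, by \Cref{defn: orbit ordered base} there is an index $\ell$ with $1\leq \ell \leq m$ and $H^{[\ell+1]} = H_{(\Delta_i)}$. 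Inspecting \Cref{algm: DDPD}, the set $X_{i+1}$ is obtained from $X_i$ by keeping each $x \in X_i \cap H_{(\Delta_i)}$ unchanged and replacing each $x \in X_i \setminus H_{(\Delta_i)}$ by a siftee $x'$ of $x$ by $H_{(\Delta_i)}$ computed through the stabiliser chain of $B$ (see \Cref{sift by reusing stab chain}). By \Cref{props of siftee by stab}, applied to $H_{(\Delta_i)}$ with its base $[\beta_{\ell+1},\ldots,\beta_m]$, such an $x'$ satisfies $x = x'w$ for some $w \in H_{(\Delta_i)}$; in particular $x' \in H$, and since $x \notin H_{(\Delta_i)}$ also $x' \notin H_{(\Delta_i)}$.

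First I would check that $\langle X_{i+1}\rangle = H$. The kept elements $X_i \cap H_{(\Delta_i)} \subseteq X_{i+1}$ generate $H_{(\Delta_i)}$ by the inductive hypothesis, so $H_{(\Delta_i)} \leq \langle X_{i+1}\rangle$; then each replaced generator $x = x'w$ with $x' \in X_{i+1}$ and $w \in H_{(\Delta_i)} \leq \langle X_{i+1}\rangle$ lies in $\langle X_{i+1}\rangle$. Hence $X_i \subseteq \langle X_{i+1}\rangle$, so $\langle X_{i+1}\rangle = \langle X_i\rangle = H$.

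The bulk of the argument is the strong generation property $H^{[t]} = \langle x \in X_{i+1} \mid x \in H^{[t]}\rangle$ for each $1\leq t \leq m+1$. Write $Y_t \coloneqq \{x\in X_{i+1} \mid x \in H^{[t]}\}$ and $Z_t \coloneqq \{x\in X_i \mid x\in H^{[t]}\}$; by the inductive hypothesis $\langle Z_t\rangle = H^{[t]}$, and $\langle Y_t\rangle \leq H^{[t]}$ is automatic, so it suffices to show $Z_t \subseteq \langle Y_t\rangle$. For $x \in Z_t \cap H_{(\Delta_i)}$ this is trivial, as $x \in X_{i+1}$ and hence $x \in Y_t$. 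For $x \in Z_t \setminus H_{(\Delta_i)}$, the fact that $x \in H^{[t]}$ but $x \notin H^{[\ell+1]} = H_{(\Delta_i)}$ forces $t \leq \ell$ (the chain being decreasing), whence $H_{(\Delta_i)} = H^{[\ell+1]} \leq H^{[t]}$; then $w,x \in H^{[t]}$ give $x' = xw^{-1} \in H^{[t]}$, so $x' \in Y_t$, and the kept elements $X_i \cap H_{(\Delta_i)}$ lie in $H^{[\ell+1]} \leq H^{[t]}$ and hence in $Y_t$, so $w \in \langle X_i \cap H_{(\Delta_i)}\rangle \leq \langle Y_t\rangle$ and therefore $x = x'w \in \langle Y_t\rangle$. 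Thus $Z_t \subseteq \langle Y_t\rangle$ and the induction closes.

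I expect the main obstacle to be the bookkeeping around the two stabiliser chains in play: the chain of $H$ defined by $B$, and the tail $[\beta_{\ell+1},\ldots,\beta_m]$ of it that serves as a base (hence stabiliser chain) of $H_{(\Delta_i)}$ used when sifting. Concretely, the two facts needing care are that every transversal element produced while sifting $x$ by $H_{(\Delta_i)}$ lies in $H_{(\Delta_i)}$ — so the factorisation $x = x'w$ holds with $w\in H_{(\Delta_i)}$ — and that passing from $x$ to its siftee $x'$ changes neither the least level of the chain containing the element (when that level is $\leq \ell$) nor the fact that it lies outside $H_{(\Delta_i)}$. Both follow from orbit-orderedness together with \Cref{props of siftee by stab}; once they are in place the remaining verifications are routine.
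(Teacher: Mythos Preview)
Your proof is correct and follows essentially the same approach as the paper's: both induct on $i$, use \Cref{props of siftee by stab} to write $x = x'w$ with $w \in H_{(\Delta_i)}$, and verify the strong generation property by showing $X_i \cap H^{[t]} \subseteq \langle X_{i+1} \cap H^{[t]} \rangle$ via the dichotomy between levels above and below $H_{(\Delta_i)}$. The only cosmetic difference is that the paper case-splits on whether $H^{[s+1]} \subseteq H_{(\Delta_i)}$ or $H_{(\Delta_i)} \subseteq H^{[s+1]}$, whereas you case-split on each element and derive $t \leq \ell$ (hence $H_{(\Delta_i)} \leq H^{[t]}$) when needed.
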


\begin{proof}
We proceed by induction on $i$. 
Clearly $X_1$ is a strong generating set of $H$ with respect to $B$. 
Suppose that $X_i$ is a strong generating set of $H$ with respect to $B$. 
First note that since each $x \in X_i \cap H_{(\Delta_i)}$ is also in $X_{i+1}$ and a siftee of $x \in X_i \backslash H_{(\Delta_i)}$ by $H_{(\Delta_i)}$ is not in $H_{(\Delta_i)}$, we have 
$X_i \cap H_{(\Delta_i)} = X_{i+1} \cap H_{(\Delta_i)}$. 
Let $0 \leq s \leq m$ and consider $H^{[s+1]} = H_{(\beta_1, \beta_2,  \ldots, \beta_s)}$. 
Since $B$ is an orbit-ordered base, either $H^{[s+1]} \subseteq H_{(\Delta_i)}$ or $H_{(\Delta_i)} \subseteq H^{[s+1]}$. For both cases, we will show that $H^{[s+1]}= \langle X_{i+1} \cap H^{[s+1]} \rangle$.  \\
Suppose first that $H^{[s+1]} \subseteq H_{(\Delta_i)}$. Then
\[X_i \cap H^{[s+1]} =    X_i \cap H_{(\Delta_i)} \cap H^{[s+1]} = X_{i+1} \cap H_{(\Delta_i)} \cap H^{[s+1]} =  X_{i+1} \cap H^{[s+1]}. \] 
Since $X_i$ is a strong generating set, we have $H^{[s+1]}= \langle X_i \cap H^{[s+1]} \rangle = \langle X_{i+1} \cap H^{[s+1]} \rangle$. \\ 
Suppose instead that $H_{(\Delta_i)} \subseteq H^{[s+1]}$.
Certainly $\langle X_{i+1} \cap H^{[s+1]} \rangle \leq H^{[s+1]}$. 
To show the reverse containment, since $X_i$ is a strong generating set, it suffices to show that $X_i \cap  H^{[s+1]} \subseteq \langle X_{i+1} \cap H^{[s+1]} \rangle$. 
Let $x \in X_i \cap H^{[s+1]}$. 
If $x \in  H_{(\Delta_i)}$, then by the construction of $X_{i+1}$, we have $x \in X_{i+1}$, hence 
$x \in X_{i+1} \cap H^{[s+1]}$. 
Else suppose that $x \in (X_i \cap H^{[s+1]}) \backslash H_{(\Delta_i)}$. 
Then there exists a siftee $x'$ of $x$ by $H_{(\Delta_i)}$ such that $x' \in X_{i+1}$. 
By \Cref{props of siftee by stab}, there exists $g \in H_{(\Delta_i)}\leq H^{[s+1]}$ such that $x = x' g$.
Then $x'=xg^{-1} \in H^{[s+1]}$ and so $x' \in X_{i+1} \cap H^{[s+1]}$. 
\ifthenelse{\equal{\thesis}{1}}{
Observe that
\begin{eqnarray*}
H_{(\Delta_i)} &=&  \langle X_i \cap H_{(\Delta_i)} \rangle \text{ since $X_i$ is a strong generating set} \\
&=& \langle X_{i+1} \cap H_{(\Delta_i)} \rangle \text{ since $X_i \cap H_{(\Delta_i)} = X_{i+1} \cap H_{(\Delta_i)}$} \\
&\subseteq& \langle X_{i+1} \cap H^{[s+1]} \rangle \text{ since $H_{(\Delta_i)} \subseteq H^{[s+1]}$.}
\end{eqnarray*}
So $g \in \langle X_{i+1} \cap H^{[s+1]} \rangle$ and hence $x \in \langle X_{i+1} \cap H^{[s+1]} \rangle$. 
}{
Since \[ g \in H_{(\Delta_i)} =  \langle X_i \cap H_{(\Delta_i)} \rangle 
= \langle X_{i+1} \cap H_{(\Delta_i)} \rangle 
\subseteq \langle X_{i+1} \cap H^{[s+1]} \rangle,  \]}
we have $x \in \langle X_{i+1} \cap H^{[s+1]} \rangle$. 
\end{proof}

\begin{lemma} \label{iterative step in ddpd correct}
Let $1 \leq i \leq k-1$.
Let $X_i$ be an ${i}$-separable generating set of $H$ and let $\mathcal{P}_i = \langle C_1 \mid C_2 \mid \ldots \mid C_r \rangle$ be as in \Cref{defin: i partition}. 
Let $X_{i+1}$ and $Q$ be the output of $DDPD(i, B, X_i, \mathcal{P}_i)$ from \Cref{algm: DDPD}. 
Then 
\begin{enumerate}
    \item \label{iteratively computes ddfd} $\mathcal{P}_{i+1} = Q$. 
    \item \label{maintain separable} $X_{i+1}$ is an $(i+1)$-separable generating set of $H$.
\end{enumerate}
Hence \Cref{algm: DDPD} is correct. 
\end{lemma}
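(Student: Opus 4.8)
The plan is to prove the two numbered claims in turn, using the structural results already in hand --- \Cref{finest ddfd i+1}, \Cref{get theta from sgs}, \Cref{image of theta by siftee}, \Cref{props of siftee by stab} --- together with the $i$-separability hypothesis on $X_i$. A preliminary observation, used repeatedly, is that $i$-separability is exactly what makes line~6 of \Cref{algm: DDPD} well-defined: for each $x \in X_i$ with $x \notin H_{(\Delta_i)}$ (equivalently $\Proj_{\overline{i}}(x) \neq 1$) there is a unique cell $C_j$ of $\mathcal{P}_i$ with $\Proj_{\overline{i}}(x) \in \Proj_{C_j}(H) \times 1_{\overline{i}\backslash C_j}$; for that cell $\Proj_{C_j}(x)\neq 1$, while $\Proj_{C_{j'}}(x)=1$ for every other cell $C_{j'}$ (since distinct cells of $\mathcal{P}_i$ are disjoint); and for $x \in H_{(\Delta_i)}$ no cell $C_j$ has $\Proj_{C_j}(x)\neq 1$ at all.

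For \ref{iteratively computes ddfd} it suffices to show that the set $S$ built by \Cref{algm: DDPD} equals $\{C_j \mid \Proj_{C_j}(H)\times 1_{\overline{i}\backslash C_j}\not\subseteq\mathrm{Ker}(\theta_i)\}$, because then \Cref{finest ddfd i+1} identifies $\mathcal{P}_{i+1}$ with precisely the partition the algorithm returns as $Q$. I would argue the equality as follows. The sifting on line~7 uses the orbit-ordered base $B$ of \Cref{algm: DDPD overall} (see \Cref{sift by reusing stab chain}), so \Cref{image of theta by siftee} applies and the test $\proj_{i+1}(x')\neq 1$ on line~9 is equivalent to $\proj_{i+1}(x)\notin N_{i+1}$. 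Hence $C_j\in S$ if and only if there is $x\in X_i$ with $\Proj_{C_j}(x)\neq 1$ and $\proj_{i+1}(x)\notin N_{i+1}$. On the other hand, \Cref{get theta from sgs} gives $\theta_i(\Proj_{C_j}(H)\times 1_{\overline{i}\backslash C_j})=\langle N_{i+1}\proj_{i+1}(x)\mid x\in X_i,\ \Proj_{C_j}(x)\neq 1\rangle$, which is the trivial subgroup of $G_{i+1}/N_{i+1}$ exactly when $\proj_{i+1}(x)\in N_{i+1}$ for every such $x$. Comparing the two, $C_j\in S$ if and only if $\Proj_{C_j}(H)\times 1_{\overline{i}\backslash C_j}\not\subseteq\mathrm{Ker}(\theta_i)$, as required; \Cref{finest ddfd i+1} then yields $\mathcal{P}_{i+1}=Q$.

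For \ref{maintain separable} I would first recall that $X_{i+1}$ is a strong generating set of $H$ relative to $B$ by the preceding lemma, so only $(i+1)$-separability of $X_{i+1}$ remains, and I would verify it by cases on how an element of $X_{i+1}$ arises. An element $x\in X_i\cap H_{(\Delta_i)}$ copied over has $\Proj_{\overline{i}}(x)=1$, so $\Proj_{\overline{i+1}}(x)$ is either trivial or supported entirely on $\Omega_{i+1}$, hence lies in $\Proj_C(H)\times 1_{\overline{i+1}\backslash C}$ for the cell $C$ of $\mathcal{P}_{i+1}$ containing $i+1$ (using $\overline{i+1}\backslash C\subseteq\overline{i}$) and in no other cell, since the remaining cells of $\mathcal{P}_{i+1}$ are cells of $\mathcal{P}_i$ and thus subsets of $\overline{i}$. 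For a siftee $x'$ of some $x\in X_i\backslash H_{(\Delta_i)}$, \Cref{props of siftee by stab} applied with $H_{(\Delta_i)}$ in place of $H$ gives $h\in H_{(\Delta_i)}$ with $x=x'h$, so $\Proj_{\overline{i}}(x')=\Proj_{\overline{i}}(x)$, which by the preliminary observation lies in $\Proj_{C_j}(H)\times 1_{\overline{i}\backslash C_j}$ for the unique cell $C_j$ found on line~6 and is nontrivial. If $C_j\in S$ then $C_j\cup\{i+1\}\subseteq C$, so $x'$ is supported inside $C$ and trivial on $\overline{i+1}\backslash C$; if $C_j\notin S$ then, by the definition of $S$ and \Cref{image of theta by siftee}, this particular siftee satisfies $\proj_{i+1}(x')=1$, so $x'$ is supported inside the cell $C_j$ of $\mathcal{P}_{i+1}$ and trivial on $\overline{i+1}\backslash C_j$; in both cases uniqueness of the cell is immediate because the cells of $\mathcal{P}_{i+1}$ are pairwise disjoint and $x'\neq 1$. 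Correctness of \Cref{algm: DDPD} then follows directly from \ref{iteratively computes ddfd} and \ref{maintain separable}.

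The main obstacle I anticipate is not conceptual but careful bookkeeping with supports --- keeping straight whether a given restriction is being read on $\overline{i}$, on $\overline{i+1}$, on a cell $C_j$, or on the merged cell $C$ --- and, within \ref{maintain separable}, making sure the case $C_j\notin S$ genuinely forces $\proj_{i+1}(x')=1$ for the \emph{specific} siftee stored in $X_{i+1}$. This last point is exactly where \Cref{image of theta by siftee} is indispensable: it decouples the condition $\proj_{i+1}(x')=1$ from the (non-canonical) choice of siftee and ties it instead to the choice-independent condition $\proj_{i+1}(x)\in N_{i+1}$.
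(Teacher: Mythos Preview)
Your proposal is correct and follows essentially the same route as the paper: for part~\ref{iteratively computes ddfd} you identify the algorithm's $S$ with the set in \Cref{finest ddfd i+1} via \Cref{get theta from sgs} and \Cref{image of theta by siftee}, and for part~\ref{maintain separable} you split into the two cases $y\in X_i\cap H_{(\Delta_i)}$ and $y$ a siftee, exactly as the paper does. One minor simplification: in the $C_j\notin S$ subcase you do not actually need \Cref{image of theta by siftee}, since the siftee placed in $X_{i+1}$ on line~8 is the very siftee tested on line~9, so $C_j\notin S$ directly forces $\proj_{i+1}(x')=1$ for that stored element---this is the implicit argument the paper uses, and it makes your closing worry about ``decoupling from the choice of siftee'' unnecessary here.
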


\begin{proof}
\Cref{iteratively computes ddfd}: 
We show that the set $S$ constructed from \Cref{algm: DDPD} is the same as the set in \Cref{finest ddfd i+1 - partn ver}, from which the result will follow. 
Observe that we add $C_j$ to $S$ in line 10 if and only if there exists $x \in X_i$ with non-trivial projection $\Proj_{\overline{i}}(x)$ such that
$\Proj_{\overline{i}}(x) \in \Proj_{C_j}(H) \times 1_{\overline{i} \backslash C_j}$ and
$\proj_{i+1}(x') \neq 1$, where $x'$ is a siftee of $x$ by $H_{(\Delta_i)}$. 
Since $X_i$ is $i$-separable, by \Cref{get theta from sgs,image of theta by siftee}, 
such an $x \in X_i$ exists if and only if $\theta_i(\Proj_{C_j}(H) \times 1_{\overline{i}\backslash C_j}) \neq N_{i+1}$. \ifthenelse{\equal{\thesis}{1}}{That is, $\Proj_{C_j}(H) \times 1_{\overline{i} \backslash C_j} \not \subseteq \mathrm{Ker}(\theta_i)$.}{}
The result then follows from \Cref{finest ddfd i+1 - partn ver}. \\
\Cref{maintain separable}: Let $y \in X_{i+1}$. We show that if $\Proj_{\overline{i+1}}(y) \neq 1$, then there exists a unique cell $C'$ of $\mathcal{P}_{i+1}$ such that $\Proj_{\overline{i+1}}(y)  \in \Proj_{C'}(H) \times 1_{\overline{i+1} \backslash C'}$.  \\
By the construction of $X_{i+1}$, either $y \in X_i \cap H_{(\Delta_i)}$ or there exists $x \in X_i \backslash H_{(\Delta_i)}$ such that $y$ is a siftee of $x$ by $H_{(\Delta_i)}$. 
Suppose first that $y \in X_i \cap H_{(\Delta_i)}$, so $\Proj_{\overline{i}}(y) = 1$. 
If $\Proj_{\overline{i+1}}(y) \neq 1$, then $\proj_{i+1}(y) \neq 1$ and so $\Proj_C(y) \neq 1$, where $C$ is defined as in line 16. 
Since $\overline{i+1} \backslash C = \overline{i} \backslash C \subseteq \overline{i}$, we have $\Proj_{\overline{i+1} \backslash C}(y) = 1$. So $C$ is the unique cell of $\mathcal{P}_{i+1}$ such that $\Proj_{\overline{i+1}}(y) = \Proj_{C}(y) \times 1_{\overline{i+1} \backslash C} \in \Proj_{C}(H) \times 1_{\overline{i+1} \backslash C}$. \\
Suppose now that $y$ is a siftee of some $x \in X_i \backslash H_{(\Delta_i)}$ by $H_{(\Delta_i)}$.
By \Cref{props of siftee by stab}, there exists $g \in H_{(\Delta_i)}$ such that $x = y g$. 
Then since $\Proj_{\overline{i}}(g)=1$, we have $\Proj_{\overline{i}}(x) = \Proj_{\overline{i}}(y)$. 
As $X_i$ is ${i}$-separable, there exists exactly one cell $C_j$ of $\mathcal{P}_i$ such that 
\ifthenelse{\equal{\thesis}{1}}{
\[ \Proj_{\overline{i}}(y) = \Proj_{\overline{i}}(x) \in 
\Proj_{C_j}(H) \times 1_{\overline{i} \backslash C_j}. 
\]}{$\Proj_{\overline{i}}(y) = \Proj_{\overline{i}}(x) \in 
\Proj_{C_j}(H) \times 1_{\overline{i} \backslash C_j}$.  \\}
If $C_j  \not \in S$, then $C_j$ is a cell of $\mathcal{P}_{i+1}$ and so $\Proj_{\overline{i+1}}(y) \in \Proj_{C_j}(H) \times 1_{\overline{i+1} \backslash C_j}$. 
Otherwise if $C_j \in S$, then 
$\Proj_{\overline{i+1}}(y) \in \Proj_{C_j \cup \{i+1\}}(H) \times 1_{\overline{i} \backslash C_j}$. 
In particular, since $C_j \cup \{i+1\} \subseteq C$, we have $\Proj_{\overline{i+1}}(y) \in \Proj_{C}(H) \times 1_{\overline{i+1} \backslash C}$. 
\end{proof}

\ifthenelse{\equal{\thesis}{1}}{

\begin{theorem}\label{thm: ddpd in poly time}
Given $H \leq S_n$ by the generating set $X$. Then the finest disjoint direct product decomposition of $H$ can be computed in time polynomial in $|X|n$.
\end{theorem}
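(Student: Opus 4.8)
The plan is to verify that \Cref{algm: DDPD overall} is correct and that it runs in time polynomial in $n \cdot |X|$. Correctness is almost immediate from the results already established. By \Cref{concat orbs gives orbit ordered base}, the concatenation of the $H$-orbits $\Omega_1, \dots, \Omega_k$ is a (redundant) orbit-ordered base, so lines 1--3 of \Cref{algm: DDPD overall} can be carried out: we compute the orbits, fix an ordering $\mathcal{O}$, delete redundant base points, and run the Schreier--Sims algorithm to obtain an orbit-ordered base $B$ and a strong generating set $X_1$ of $H$ relative to $B$. The initial partition $\mathcal{P}_1 = \langle \{1\} \rangle$ is correct by the remark following \Cref{defin: i partition}, and $X_1$ is vacuously $1$-separable. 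Now \Cref{iterative step in ddpd correct} gives, by induction on $i$, that after the $i$-th iteration of the loop we hold exactly the partition $\mathcal{P}_{i+1}$ of \Cref{defin: i partition} together with an $(i+1)$-separable strong generating set of $H$; hence the returned partition is $\mathcal{P}_k$, which encodes the finest disjoint direct product decomposition of $H = \Proj_{\overline{k}}(H)$.

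For the running time I would split the analysis into preprocessing and the main loop. Computing the orbits and the ordering costs $O(n \cdot |X|)$. The Schreier--Sims algorithm computes the base $B$, the stabiliser-chain transversals, and $X_1$ in time polynomial in $n \cdot |X|$ (see \cite{handbook, seress}), and after standard simplification $|X_1|$ is bounded by a polynomial in $n$ alone. Crucially, the transversals of the stabiliser chain of $B$ are computed once here and then reused for every sift, so no further stabiliser-chain computation is needed.

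For the loop, which executes $k - 1 < n$ times, the key invariant is that the working generating set never grows: inspecting \Cref{algm: DDPD}, each $x \in X_i$ contributes exactly one permutation (either $x$ itself or a single siftee $x'$) to $X_{i+1}$, so $|X_{i+1}| \le |X_i|$ and hence $|X_i| \le |X_1|$ for all $i$. Within one call we loop over the at most $|X_1|$ elements of $X_i$ and, for each, perform: a pointwise-stabiliser membership test against $\Delta_i$ ($O(n)$); the lookup of the unique cell $C_j$ of $\mathcal{P}_i$ with $\Proj_{\overline{i}}(x) \in \Proj_{C_j}(H) \times 1_{\overline{i} \backslash C_j}$, which is well-defined by $i$-separability and costs $O(n + k)$; a sift of $x$ through $H_{(\Delta_i)}$ using the tail of the stabiliser chain of $B$ as in \Cref{sift by reusing stab chain}, which is polynomial in $n$ since the chain has at most polynomially many levels and each level costs $O(n)$; and the test $\proj_{i+1}(x') \ne 1$ ($O(n)$). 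Assembling $C$ and $\mathcal{P}_{i+1}$ costs $O(k + |X_1|)$. So one call to \Cref{algm: DDPD} costs $\mathrm{poly}(n) \cdot |X_1|$, the whole loop costs $n \cdot \mathrm{poly}(n) \cdot |X_1| = \mathrm{poly}(n)$, and the total runtime is dominated by the Schreier--Sims preprocessing, giving $\mathrm{poly}(n \cdot |X|)$.

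The one place where a careless argument breaks down, and hence the step I would write out most carefully, is the size control of the working generating set: if $|X_i|$ were allowed to grow with $i$ (for instance by keeping old generators alongside their siftees), the loop would not be polynomial. The ``one in, one out'' construction of $X_{i+1}$ in \Cref{algm: DDPD} is precisely what prevents this, so I would state and prove $|X_{i+1}| \le |X_i|$ as a lemma before the timing estimate. A secondary subtlety to flag is that each sift must reuse the precomputed transversals of the chain for $B$ rather than building a stabiliser chain for $H_{(\Delta_i)}$ afresh, which is exactly what \Cref{sift by reusing stab chain} licenses; otherwise a hidden polynomial factor in $n$ (or worse) would be incurred per element per iteration.
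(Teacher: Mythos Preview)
Your proposal is correct and follows essentially the same approach as the paper: establish correctness via \Cref{iterative step in ddpd correct}, then bound the running time by appealing to standard polynomial-time permutation-group primitives (orbit computation, Schreier--Sims, sifting) and observing that the loop runs $O(n)$ times with $|X_{i+1}| \le |X_i|$. Your write-up is in fact more explicit than the paper's about the size-control invariant and the reuse of the stabiliser-chain transversals, but these are elaborations of the same argument rather than a different route.
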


\begin{proof}
Let $Q \coloneqq \langle C_1 \mid C_2 \mid \ldots \mid C_r \rangle$ be the partition computed by \Cref{algm: DDPD overall}. 
Then by \Cref{iteratively computes ddfd} of \Cref{iterative step in ddpd correct}, $Q = \mathcal{P}_k$, and so
$H = \Proj_{C_1}(H) \times \Proj_{C_2}(H) \times \ldots \times \Proj_{C_r}(H)$ is the finest disjoint direct product decomposition of $H$. \\
It remains to show that \Cref{algm: DDPD overall} runs in polynomial time. 
By \Cref{elementary poly time} and \Cref{schreier sims in poly time}, lines 1 and 3 run in polynomial time. 
By \Cref{concat orbs gives orbit ordered base}, we can get an orbit-ordered base in polynomial time. 
Since \Cref{algm: DDPD} is called at most $O(n)$ times, it remains to show that \Cref{algm: DDPD} runs in time polynomial in $|X|n$. \\
By \Cref{pointstab,membership test} of \Cref{subsections: consequences of Scherier Sims}, lines 5 and 7 can be done in polynomial time. 
Line 6 can be done by iterating through each cell $C_j$ of $\mathcal{P}_i$ and checking if $\Proj_{C_j}(x) \neq 1$. Since $\mathcal{P}_i$ has at most $k$ cells, this can be done in $O(k)$ time. 
Therefore the result follows. 
\end{proof}

}{
The following results are fundamental in permutation group algorithms. For more information, refer to, for example, \cite{seress, handbook}. 

\begin{lemma}\label{list of poly algm}
Given $K \leq S_n$ by a generating set $Y$, the following permutation group algorithms run in time polynomial in $n \cdot |Y|$:
\begin{enumerate}
    \item Computing orbits of $K$.
    \item Computing a strong generating set $X$ of $K$ with respect to a given base $B$.
    \item Computing pointwise stabilisers of $K$.
    \item Obtaining a siftee of $x \in S_n$ by $K$ using the sifting procedure. 
    \item Testing membership of $x \in S_n$ in $K$. 
\end{enumerate}
\end{lemma}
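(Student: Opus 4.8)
The plan is to derive all five items from the Schreier--Sims algorithm and the data structures it produces, quoting the standard complexity analysis rather than reproving it; see \cite[Chapter~4]{seress} or \cite{handbook}. Item~(1) is the only one not requiring a stabiliser chain: from each point $\alpha \in \overline{n}$ not yet assigned to an orbit, run a breadth-first search in the graph on $\overline{n}$ with edges $\{\beta,\beta^{y}\}$ for $y \in Y$; each point is processed once and then has each of the $|Y|$ generators applied to it, so the total cost is $O(n \cdot |Y|)$ beyond the $O(n)$ cost of storing each permutation. The search also yields, for each orbit, a Schreier vector (a spanning tree whose edges are labelled by generators), which lets one write the transporting permutation of any orbit element as a word of length $\le n$ in $Y$.

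For item~(2), the Schreier--Sims algorithm applied to $Y$ and the given base $B = [\beta_1,\ldots,\beta_m]$ is exactly an algorithm that outputs a strong generating set relative to $B$. It builds the stabiliser chain $K = K^{[1]} \ge K^{[2]} \ge \cdots \ge K^{[m+1]}$ level by level, appending a new base point only when the current bottom group is non-trivial (which does not occur in our application, where $B$ is already a base). At each level it maintains a transversal of $K^{[i+1]}$ in $K^{[i]}$ as a Schreier vector together with a partial strong generating set; Schreier generators of each level are sifted through the levels below, and a non-trivial siftee is inserted into the strong generating set at the appropriate level, triggering recomputation of the affected transversals. The classical analysis shows that the algorithm terminates after polynomially many such steps, produces a strong generating set of polynomial size, and spends polynomial time per step, hence runs in time polynomial in $n \cdot |Y|$.

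Items~(3), (4) and (5) then fall out of the output of~(2). For item~(3), to compute the pointwise stabiliser $K_{(\gamma_1,\ldots,\gamma_t)}$ one runs~(2) with any base that begins $[\gamma_1,\ldots,\gamma_t,\ldots]$; by the defining property of a strong generating set, the members of the resulting set that fix $\gamma_1,\ldots,\gamma_t$ generate $K_{(\gamma_1,\ldots,\gamma_t)}$. (If an SGS relative to a different base is already at hand, a base-change procedure moves these points to the front in polynomial time.) For item~(4), given the transversals $R_i$ from~(2) stored as Schreier vectors, the sifting of $x \in S_n$ as in \Cref{sifting procedure} performs at most $m \le n$ steps; at step $i$ one looks up the representative $r_i \in R_i$ with $\beta_i^{r_i} = \beta_i^{x_i}$ by tracing the Schreier vector (cost $O(n)$, recovering $r_i$ as a word of length $\le n$) and forms $x_{i+1} = x_i r_i^{-1}$ in $O(n)$ time, so a siftee is produced in $O(n^2)$ time. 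For item~(5), membership is decided by sifting $x$ and testing whether the siftee is the identity, correctness being the standard fact that this happens exactly when $x \in K$.

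The only substantive point, and the one I would not reprove, is the polynomial running-time bound for Schreier--Sims in item~(2): bounding the number of iterations and the size of the strong generating set requires the usual argument that each inserted siftee strictly refines the chain and that sifting keeps the generator count under control. Once that bound is quoted from \cite[Chapter~4]{seress}, items~(1), (3), (4) and (5) are immediate from the resulting stabiliser chain and its transversals.
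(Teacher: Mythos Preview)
Your proposal is correct and aligns with the paper's treatment: the paper does not prove this lemma at all but simply states it as a standard fact, prefacing it with ``The following results are fundamental in permutation group algorithms'' and referring the reader to \cite{seress, handbook}. Your sketch, grounded in the Schreier--Sims algorithm and citing the same sources, is thus more detailed than what the paper provides, but entirely in the same spirit.
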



\begin{theorem}
Given $H \leq S_n$ by the generating set $X$, the finest disjoint direct product decomposition of $H$ can be computed in time polynomial in $n \cdot |X|$.
\end{theorem}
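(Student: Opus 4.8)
The plan is to verify that \Cref{algm: DDPD overall} is correct and that it runs in time polynomial in $n\cdot|X|$; the theorem then follows. Correctness is essentially already established: by \Cref{iterative step in ddpd correct}, a single call to \Cref{algm: DDPD} takes an $i$-separable strong generating set $X_i$ of $H$ together with the partition $\mathcal{P}_i$ of \Cref{defin: i partition}, and returns an $(i+1)$-separable strong generating set $X_{i+1}$ together with $\mathcal{P}_{i+1}$. Since \Cref{algm: DDPD overall} initialises with $\mathcal{P}_1=\langle\{1\}\rangle$ and with a strong generating set $X_1$ of $H$ relative to an orbit-ordered base $B$ (which exists by \Cref{concat orbs gives orbit ordered base}, and is trivially $1$-separable), an easy induction on $i$ shows that the loop outputs $\mathcal{P}_k$. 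By \Cref{defin: i partition}, $\mathcal{P}_k=\langle C_1\mid\ldots\mid C_r\rangle$ is precisely the partition for which $H=\Proj_{\overline{k}}(H)=\Proj_{C_1}(H)\times\ldots\times\Proj_{C_r}(H)$ is the finest disjoint direct product decomposition of $H$; this decomposition is the unique such one by \Cref{finest DDPD is unique}.

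For the running time, I would first bound the one-off preprocessing in \Cref{algm: DDPD overall}. Computing the $H$-orbits and ordering them (say, by smallest element) is polynomial by \Cref{list of poly algm}(1); concatenating the orbits produces an orbit-ordered base $B$ of length at most $n$ by \Cref{concat orbs gives orbit ordered base}; and computing a strong generating set $X_1$ of $H$ relative to $B$ is polynomial by \Cref{list of poly algm}(2), so in particular $|X_1|$ is polynomial in $n\cdot|X|$. The body of the loop in \Cref{algm: DDPD overall} is executed $k-1<n$ times, so it suffices to bound the cost of a single call to \Cref{algm: DDPD}.

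Inside \Cref{algm: DDPD} I would first note that $|X_{i+1}|\le|X_i|$, because each $x\in X_i$ contributes at most one element to $X_{i+1}$; hence $|X_i|\le|X_1|$ is polynomial for all $i$, and the \textbf{for} loop over $X_i$ runs polynomially many times. Each pass through the loop performs: the test $x\in H_{(\Delta_i)}$, which (as $x\in H$ already) just checks whether $x$ fixes the at most $n$ points of $\Delta_i$, costing $O(n)$; the search for the unique cell $C_j$ of $\mathcal{P}_i$ with $\Proj_{\overline{i}}(x)\in\Proj_{C_j}(H)\times 1_{\overline{i}\backslash C_j}$, which by $i$-separability is found by locating, among the at most $k$ cells, the one containing all orbit-indices $j\le i$ on which $x$ acts nontrivially, costing $O(nk)$; the computation of a siftee $x'$ of $x$ by $H_{(\Delta_i)}$ reusing the fixed partial stabiliser chain of $B$, as in \Cref{sift by reusing stab chain}, which is polynomial by \Cref{list of poly algm}(4); and the test $\proj_{i+1}(x')\ne 1$, costing $O(n)$. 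Assembling $C$ and $\mathcal{P}_{i+1}$ from $S$ at the end costs $O(k^2)$. Summing over $x\in X_i$ and over the $k-1$ calls gives a running time polynomial in $n\cdot|X|$, which completes the proof.

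The point requiring the most care is the bookkeeping needed to keep everything polynomial. First, the sizes $|X_i|$ must never grow — otherwise one could accumulate exponentially many generators — which is why each generator of $X_i$ contributes at most one generator of $X_{i+1}$. Second, each sift must reuse one fixed stabiliser chain (that of $B$) rather than recomputing a base and strong generating set of each stabiliser $H_{(\Delta_i)}$ from scratch; this is exactly the purpose of using an orbit-ordered base and of \Cref{sift by reusing stab chain}. Third, the $X_i$ must genuinely remain strong generating sets of $H$ relative to $B$ throughout, which is precisely what the earlier lemma on the $X_i$ guarantees and what makes the siftees well-behaved. Once these three points are in place, the theorem is obtained by combining \Cref{iterative step in ddpd correct} with the polynomial-time guarantees of \Cref{list of poly algm}.
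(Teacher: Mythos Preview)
Your proof is correct and follows essentially the same approach as the paper: you invoke \Cref{iterative step in ddpd correct} for correctness and then bound the preprocessing and each call to \Cref{algm: DDPD} using \Cref{list of poly algm}, noting that $|X_{i+1}|\le|X_i|$ keeps the generating sets polynomial. Your write-up is in fact more detailed than the paper's (for instance, you explicitly cost the cell-search in line 6 and the test on line 5, and you spell out the bookkeeping about reusing the stabiliser chain), but the structure and key observations are identical.
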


\begin{proof}
Let $Q \coloneqq \langle C_1 \mid C_2 \mid \ldots \mid C_r \rangle$ be the partition computed by \Cref{algm: DDPD overall}. 
Then by \Cref{iteratively computes ddfd} of \Cref{iterative step in ddpd correct}, $Q = \mathcal{P}_k$, and so
$H = \Proj_{C_1}(H) \times \Proj_{C_2}(H) \times \ldots \times \Proj_{C_r}(H)$ is the finest disjoint direct product decomposition of $H$. \\
It remains to show that \Cref{algm: DDPD overall} runs in polynomial time. 
By \Cref{list of poly algm}, lines 1 and 3 run in polynomial time. Furthermore $|X_1|$ is of polynomial size. 
By \Cref{concat orbs gives orbit ordered base}, we can get an orbit-ordered base in polynomial time. 
Since \Cref{algm: DDPD} is called at most $O(n)$ times, it suffices to show that \Cref{algm: DDPD} runs in time polynomial in $n \cdot |X|$. \\
By \Cref{list of poly algm}, lines 5 and 7 can be done in polynomial time. 
Line 6 can be done by iterating through each cell $C_j$ of $\mathcal{P}_i$ and checking if $\Proj_{C_j}(x) \neq 1$. Since $\mathcal{P}_i$ has at most $k$ cells, this can be done in $O(k)$ time. 
Lastly since $|X_{i+1}| = |X_i|$, the result follows. 
\end{proof}

}

\section{Experiments}
\label{section: experiments}

In this section, we will investigate the practical performance of our algorithm for finding the finest disjoint direct product decomposition of a permutation group. As well as illustrating the performance of our algorithm, we will show how it can be used to improve the performance of a range of GAP implementations to solve important group-theoretic problems. Our algorithm is implemented in GAP 4.11 \cite{GAP4}.

\ignore{
There are two main reasons we might try to find a direct product decomposition of a group \(G\). We may be interested in better understanding the structure of \(G\), or we may hope the knowledge of the direct product decomposition will help speed up other calculations. When we are using the direct product decomposition to speed up other calculations, it is important that the cost of the decomposition is smaller than the time saved.

We experiment with two families of groups:

\begin{enumerate}
    \item The isomorphism groups of graphs used as subgraph isomorphism benchmarks. These graphs come from various sources.
    \item Randomly generated groups.
\end{enumerate}

\subsection{Graph Isomorphism}

We will consider 3 classes of graphs from \cite{}. For each class, we show the time taken to generate the generators for the isomorphism group of the graph (which also calculates the size of the group). The then we show the time taken to run our algorithm split into two pieces -- the time taken to build the required base and strong generating set, and the time taken to run the rest of the algorithm.

We generate the automorphisms using the Bliss system, which is part of the Digraph package of GAP. This system seems to take a surprisingly long time on some graphs, as shown in our results. It returns the exact size of the group, which is created, which greatly improves the performance when then finding a base and strong generating set.

We can see the time taken to run our algorithm is, on average, smaller than the time taken to either generate the isomorphisms or calculate a base and strong generating set. This shows that assuming a base and strong generating set is required, it is reasonable to then always run our algorithm. In this case, we were able to break the group into many smaller pieces.

\begin{tabular}{cccccc}
    \hline
    Class & Time & Time & Time  & Size & Size \\
    &(Graph Iso)&(Stab Chain) & (Decomp) & (Orig) & (Sum Decomp) \\
    \hline
    images-CVIU11 & 65.5 & 73.8 & 4.1 & $1.2*10^9$ & 31 \\
    biochemicalReactions & 3.6 & 226.4 & 1.57 & $2.1*10^{37}$ & $5.8*10^{14}$\\
    meshes-CVIU11 & 3381.7 & 1,034,318 & 147.84 & $1.0*10^{1130}$ & $4.3 * 10^{950}$ \\
    \hline
    
\end{tabular}
}

We will test our algorithm on randomly created groups. The creator we use in this \ifthenelse{\equal{\thesis}{1}}{chapter}{paper} is very straightforward and does not claim to produce all groups with equal probability. The creator takes three parameters, a transitive permutation group \(G\) and two integer constants \(r\) and \(s\). The algorithm will produce a permutation group which is the direct product of \(r\) groups. Each of these direct factors will be a subdirect product of \(G^s\) that is d.d.p.\ indecomposable. The algorithm runs in two stages.

The first stage is implemented by a function \textsc{MakeSubdirect(\(G,s\))} which produces a random subdirect product of \(s\) copies of \(G\) that is d.d.p.\ indecomposable. \textsc{Makesubdirect(\(G,s\))} works by taking a random integer \(i \in \{2,3,\dots,s\}\)  and then taking the group \(H\) generated by \(i\) random elements of \(G^s\). If \(H\) is d.d.p.\ indecomposable, and its projection onto each of the \(s\) copies of \(G\) is surjective, then it is returned, else this procedure repeats.

The second stage simply runs \textsc{MakeSubdirect(\(G,s\))} \(r\) times, and takes the direct product of these \(r\) groups. Finally, we conjugate this group by a random permutation on the set of points moved by this group, so the decomposition does not follow the natural ordering of the integers.

Our algorithmic section is split into two sections. Firstly we compare our algorithm against the algorithm of Donaldson and Miller \cite{donaldson_miller_2009}. Secondly, to demonstrate that our algorithm has immediate practical value, we show how a few functions in GAP can easily be sped up by the knowledge of a disjoint direct product decomposition.

\subsection{Comparison to Donaldson and Miller}
 
We first compare to Donaldson and Miller's algorithm. Donaldson and Miller present two algorithms, an incomplete algorithm and a complete algorithm. We will not compare against their incomplete algorithm, as it is extremely fast but requires separable strong generating sets.  Donaldson and Miller were unable to find a graph where the generating set of the automorphism group produced by nauty \ifthenelse{\equal{\thesis}{1}}{\cite{mckay}}{\cite{McKay2014}} is not separable. We were able to find graphs where nauty does not produce a separable generating set\footnote{Some examples and how one could look for more examples can be found in the supplemental files.}. Also, the most advanced graph automorphism finders, such as Traces \ifthenelse{\equal{\thesis}{1}}{\cite{mckay}}{\cite{McKay2014}}, perform random dives which produce random automorphisms. This means generating sets produced by Traces will not, in general, be separable.

When implementing Donaldson and Miller's complete algorithm, we were forced to make some implementation choices. Most significant is the ordering in which the algorithm tries to partition the orbits (the first line of Algorithm 5 in \cite{donaldson_miller_2009}). We implemented this by trying the partitions in order of increasing size of the smaller part of the partition. This has the advantage that when the finest disjoint direct product decomposition has a factor with few orbits the algorithm will run relatively quickly -- it does not affect the algorithm's worst case complexity.


We gather our results in \Cref{fig:AlgCompare}. 
Each row of \Cref{fig:AlgCompare} gives the result for 10 instances of $H$, created as above. Each instance of $H$ has $rs$ orbits with the finest disjoint direct product decomposition consisting of $r$ direct factors with $s$ orbits each, and each transitive constituent of $H$ is permutation isomorphic to $G$.
For each $G, r$ and $s$, we report the median time (``Median'') and the number of completed instances (``\#'') of the 10 instances of $H$, using both Algorithm 5 of \cite{donaldson_miller_2009} 
(``Donaldson'') and \Cref{algm: DDPD overall} (``Our Algorithm'').
The results are consistent with the theory. Since our algorithm is polynomial, it scales much better than the algorithm in \cite{donaldson_miller_2009}. 

\begin{figure}[ht]
\centering
\ifthenelse{\equal{\thesis}{1}}{\renewcommand{\arraystretch}{1.25}}{}
\begin{tabular}{ccccccc}
\hline
G&r&s&\multicolumn{2}{c}{Donaldson}&\multicolumn{2}{c}{Our Algorithm}\\
&&&Median&\#&Median&\#\\
\hline
$D_8$ & 4 & 4 & 0.04 & 10 &  0.00 & 10 \\
$D_8$ & 6 & 4 & 0.16 & 10 &  0.01 & 10 \\
$D_8$ & 8 & 4 & 0.54 & 10 &  0.01 & 10 \\
$D_8$ & 10 & 4 & 2.18 & 10 &  0.02 & 10 \\
\hline
$A_4$ & 4 & 4 & 2.47 & 10 &  0.01 & 10 \\
$A_4$ & 6 & 4 & 12.47 & 10 &  0.03 & 10 \\
$A_4$ & 8 & 4 & 158.09 & 10 &  0.04 & 10 \\
$A_4$ & 10 & 4 & 490.54 & 5 &  0.05 & 10 \\
\hline
$S_4$ & 4 & 4 & 3.37 & 10 &  0.02 & 10 \\
$S_4$ & 6 & 4 & 15.99 & 10 &  0.04 & 10 \\
$S_4$ & 8 & 4 & 393.86 & 8 &  0.06 & 10 \\
$S_4$ & 10 & 4 & N/A & 0 &  0.07 & 10 \\
\hline
\end{tabular}
\caption{Comparison of DDPD algorithms} 
\label{fig:AlgCompare}
\end{figure}

\subsection{Application to GAP}

In this section, we will see how some functions in GAP can easily be sped up by the knowledge of a disjoint direct product decomposition. We do not claim that this is an exhaustive list, but we intend to demonstrate how, for a selection of common problems, calculating a disjoint direct product decomposition can significantly improve performance. We experimented with three GAP functions: 

\begin{description}
    \item[\textsc{DerivedSubgroup}] computes the derived subgroup of a given group. The derived subgroup of a group $H$ is the direct product of the derived subgroup of each disjoint direct factor of $H$.
    \item[\textsc{NrConjugacyClasses}] gives the number of conjugacy classes of a given group. The number of conjugacy classes of a group $H$ is the product of the number of conjugacy classes of each disjoint direct factor of $H$.
    \item[\textsc{CompositionSeries}] computes a composition series of a given group. A composition series of a group $H$ can be constructed from composition series of the direct factors of $H$. 
\end{description}

In our experiments, we run each row of our tables ten times. Each run is given a limit of 10 minutes and 4GB of memory. We give the median time in seconds (or N/A when less than 6 instances finished successfully). For the inner group \(G\) we consider the alternating group ($A_n$), symmetric group ($S_n$), dihedral groups ($D_{2n}$) of varying degree \(n\), and also 
$\textsc{TransitiveGroup}(16,712)$ ($Trans(16,712)$) and $\textsc{TransitiveGroup}(16,713)$ ($Trans(16,713)$) from the Transitive Groups Library \cite{TransGrpGAPlib}. 

Each row of the tables in \Cref{fig:DerivedSubgroup,fig:NrConjugacyClasses,fig:Composeries}  gives the results for 10 random groups $H$, each with $rs$ orbits, 
where the projection of $H$ onto each orbit is permutation isomorphic to $G$, 
and $H$ has the finest disjoint direct product decomposition consisting of $r$ direct factors with $s$ orbits each. 
The columns ``Full Group'' and ``Decomposed Group'' refer to the computation with the original group $H$ and the computation with the disjoint direct product factors of $H$ respectively. The column ``Decomposition'' refers to the computation of the finest disjoint direct product decomposition of $H$. 
For each of these columns, we report the median time (in seconds) required to compute the specified problems of the 10 instances under the subcolumns ``Median'', 
and the number of instances completed within the time and memory limits under the subcolumns ``\#''. 

The time taken to find the finest disjoint direct product decomposition and solve \textsc{DerivedSubgroup} (\Cref{fig:DerivedSubgroup}) and \textsc{NrConjugacyClasses}  (\Cref{fig:NrConjugacyClasses}) on the decomposed group are always faster than solving the problem on the original full group. 
In the case of \textsc{DerivedSubgroup} (\Cref{fig:DerivedSubgroup}), we speed up performance by up to a factor of 10.  In the case of \textsc{NrConjugacyClasses}  (\Cref{fig:NrConjugacyClasses}) 
and \textsc{CompositionSeries} (\Cref{fig:Composeries}), we are able to solve problems that previously ran out of memory or time, in under a second. Almost the entire time taken by Decomposition is building an initial stabiliser chain with respect to an orbit-ordered base. For example, in the largest problem in \Cref{fig:DerivedSubgroup}, building the stabiliser chain took 22.5 of the 23.7 seconds taken in total. The time taken to build a stabiliser chain with respect to an orbit-ordered base is, on average, no longer than the time taken to build a stabiliser chain using GAP's default strategy for base ordering.

\begin{figure}[hpt!]
\centering
\ifthenelse{\equal{\thesis}{1}}{\renewcommand{\arraystretch}{1.25}}{}
\begin{tabular}{ccccccccc}
\hline
G&r&s&\multicolumn{2}{c}{Full Group}&\multicolumn{2}{c}{Decomposed Group}&\multicolumn{2}{c}{Decomposition}\\
&&&Median&\#&Median&\#&Median&\#\\
\hline
$A_4$ & 12 & 4 & 2.34 & 10 & 0.01 & 10 & 0.28 & 10 \\
$A_4$ & 16 & 4 & 7.53 & 10 & 0.01 & 10 & 0.66 & 10 \\
$A_4$ & 20 & 4 & 19.76 & 10 & 0.01 & 10 & 1.64 & 10 \\
\hline
$D_8$ & 12 & 4 & 0.33 & 10 & 0.00 & 10 & 0.20 & 10 \\
$D_8$ & 16 & 4 & 1.07 & 10 & 0.00 & 10 & 0.55 & 10 \\
$D_8$ & 20 & 4 & 2.53 & 10 & 0.00 & 10 & 1.20 & 10 \\
\hline
$S_4$ & 12 & 4 & 7.36 & 10 & 0.01 & 10 & 0.89 & 10 \\
$S_4$ & 16 & 4 & 27.89 & 10 & 0.02 & 10 & 2.34 & 10 \\
$S_4$ & 20 & 4 & 65.85 & 10 & 0.02 & 10 & 5.37 & 10 \\
\hline
$D_{32}$ & 12 & 4 & 6.58 & 10 & 0.01 & 10 & 1.02 & 10 \\
$D_{32}$ & 16 & 4 & 18.47 & 10 & 0.01 & 10 & 2.86 & 10 \\
$D_{32}$ & 20 & 4 & 44.72 & 10 & 0.02 & 10 & 6.47 & 10 \\
\hline
$Trans(16,712)$ & 12 & 4 & 17.34 & 10 & 0.03 & 10 & 4.46 & 10 \\
$Trans(16,712)$ & 16 & 4 & 56.20 & 10 & 0.04 & 10 & 11.61 & 10 \\
$Trans(16,712)$ & 20 & 4 & 144.37 & 10 & 0.05 & 10 & 27.43 & 10 \\
\hline
$Trans(16,713)$ & 12 & 4 & 13.53 & 10 & 0.03 & 10 & 3.69 & 10 \\
$Trans(16,713)$ & 16 & 4 & 38.17 & 10 & 0.05 & 10 & 10.80 & 10 \\
$Trans(16,713)$ & 20 & 4 & 90.89 & 10 & 0.06 & 10 & 23.72 & 10 \\
\hline
\end{tabular}
\caption{Performance of DDPD for \textsc{DerivedSubgroup}}
\label{fig:DerivedSubgroup}
\end{figure}

\begin{figure}[hpt!]
\centering
\ifthenelse{\equal{\thesis}{1}}{\renewcommand{\arraystretch}{1.25}}{}
\begin{tabular}{ccccccccc}
\hline
G&r&s&\multicolumn{2}{c}{Full Group}&\multicolumn{2}{c}{Decomposed Group}&\multicolumn{2}{c}{Decomposition}\\
&&&Median&\#&Median&\#&Median&\#\\
\hline
$A_3$ & 2 & 4 & 0.00 & 10 & 0.00 & 10 & 0.00 & 10 \\
$A_3$ & 4 & 4 & 0.08 & 10 & 0.00 & 10 & 0.00 & 10 \\
$A_3$ & 6 & 4 & 21.50 & 6 & 0.00 & 10 & 0.00 & 10 \\
\hline
$D_8$ & 2 & 4 & 0.23 & 10 & 0.00 & 10 & 0.00 & 10 \\
$D_8$ & 4 & 4 & N/A & 2 & 0.00 & 10 & 0.00 & 10 \\
$D_8$ & 6 & 4 & N/A & 0 & 0.01 & 10 & 0.00 & 10 \\
\hline
$S_4$ & 2 & 4 & 50.52 & 10 & 0.28 & 10 & 0.00 & 10 \\
$S_4$ & 4 & 4 & N/A & 0 & 0.58 & 10 & 0.00 & 10 \\
$S_4$ & 6 & 4 & N/A & 0 & 0.96 & 10 & 0.02 & 10 \\
\hline
\end{tabular}
\caption{Performance of DDPD for \textsc{NrConjugacyClasses}}
\label{fig:NrConjugacyClasses}
\end{figure}

\begin{figure}[hpt!]
\centering
\ifthenelse{\equal{\thesis}{1}}{\renewcommand{\arraystretch}{1.25}}{}
\begin{tabular}{ccccccccc}
\hline
G&r&s&\multicolumn{2}{c}{Full Group}&\multicolumn{2}{c}{Decomposed Group}&\multicolumn{2}{c}{Decomposition}\\
&&&Median&\#&Median&\#&Median&\#\\
\hline
$A_4$ & 4 & 3 & 0.00 & 10 & 0.00 & 10 & 0.00 & 10 \\
$A_4$ & 12 & 3 & 0.06 & 10 & 0.01 & 10 & 0.11 & 10 \\
$A_4$ & 20 & 3 & 0.22 & 10 & 0.02 & 10 & 0.68 & 10 \\
\hline
$S_4$ & 4 & 3 & 0.01 & 10 & 0.00 & 10 & 0.00 & 10 \\
$S_4$ & 12 & 3 & 0.15 & 10 & 0.02 & 10 & 0.32 & 10 \\
$S_4$ & 20 & 3 & 0.49 & 10 & 0.03 & 10 & 1.70 & 10 \\
\hline
$D_8$ & 4 & 3 & 0.00 & 10 & 0.00 & 10 & 0.00 & 10 \\
$D_8$ & 12 & 3 & 0.03 & 10 & 0.01 & 10 & 0.07 & 10 \\
$D_8$ & 20 & 3 & 0.10 & 10 & 0.02 & 10 & 0.40 & 10 \\
\hline
$D_{32}$ & 4 & 3 & 0.01 & 10 & 0.00 & 10 & 0.00 & 10 \\
$D_{32}$ & 12 & 3 & 0.11 & 10 & 0.02 & 10 & 0.27 & 10 \\
$D_{32}$ & 20 & 3 & 0.45 & 10 & 0.04 & 10 & 1.87 & 10 \\
\hline
$Trans(16,712)$ & 4 & 3 & 0.02 & 10 & 0.01 & 10 & 0.03 & 10 \\
$Trans(16,712)$ & 12 & 3 & 0.36 & 10 & 0.03 & 10 & 1.25 & 10 \\
$Trans(16,712)$ & 20 & 3 & 1.52 & 10 & 0.06 & 10 & 8.72 & 10 \\
\hline
$Trans(16,713)$ & 4 & 3 & 0.64 & 10 & 0.02 & 10 & 0.03 & 10 \\
$Trans(16,713)$ & 12 & 3 & 80.49 & 10 & 0.07 & 10 & 1.32 & 10 \\
$Trans(16,713)$ & 20 & 3 & N/A & 0 & 0.12 & 10 & 8.57 & 10 \\
\hline
\end{tabular}
\caption{Performance of DDPD for \textsc{CompositionSeries}}
\label{fig:Composeries}
\end{figure}

\section{Conclusion and future work}
\label{section: conclusion}

In this paper, we have shown that the finest disjoint direct product decomposition of a given group can be computed efficiently and can be used to speed up various permutation group problems.
Moreover, as demonstrated in \cite{donaldson_miller_2009, graylandThesis}, the disjoint direct product decomposition of a group has applications beyond computational group theory. 

While we show the disjoint direct product decomposition can be extremely useful, we are not suggesting it to be employed as an initial subprocedure of solving all the problems we use in our experiments. 
This is because adding this subprocedure will impose additional computation time on all calls of the problem that could be a waste of time if the group is d.d.p.\ indecomposable.
Using efficient heuristics to only add this subprocedure for some groups still has the same problem with the added cost and raises an issue of determining the heuristics.
Therefore, we propose that the computation of the finest disjoint direct product decomposition be available in GAP as a function, and leave it up to the user to decide if the decomposition would help the problem on hand.

An obvious piece of future work is to determine other problems, group theoretic or otherwise, that can benefit from the knowledge of its disjoint direct product decomposition. For example, the first author has found applications in her work with computing normalisers and the second author in his work with graph isomorphisms. 
We believe that the disjoint direct product decomposition has more potential in groups arising from real world problems, as these are more likely to be highly intransitive. 

\section*{Acknowledgement}
\label{section: acknowledgement}

The authors would like to thank the reviewers, Colva Roney-Dougal and Wilf
Wilson for their useful comments. The second author is supported by a Royal Society University Research Fellowship.


\bibliographystyle{alpha}
\bibliography{bib}

\end{document}